\theoremstyle{plain}
\newtheorem{theorem}{Theorem}[section]
\newtheorem*{theorem*}{Theorem}
\newtheorem{proposition}[theorem]{Proposition}
\newtheorem{lemma}[theorem]{Lemma}
\theoremstyle{definition}
\newtheorem{definition}[theorem]{Definition}
\newtheorem{notation}[theorem]{Notation}
\newtheorem{remark}[theorem]{Remark}
\DeclareMathOperator{\Res}{Res}
\newcommand{\enm}[1]{\ensuremath{#1}}          %
\newcommand{\cal}[1]{\mathcal{#1}}
\newcommand{\NN}{\enm{\mathbb{N}}}
\renewcommand{\AA}{\enm{\mathbb{A}}}
\newcommand{\PP}{\enm{\mathbb{P}}}
\newcommand{\Ii}{\enm{\cal{I}}}
\newcommand{\Oo}{\enm{\cal{O}}}
\newcommand{\Ss}{\enm{\cal{S}}}
\newcommand{\Y}{\mathbb{P}^{n_1}\times \mathbb{P}^{n_2}}
\newcommand{\SV}{S_{d_1, \ldots , d_k}(V_1, \ldots , V_k)}
\newcommand{\SVd}{S_{d_1, d_2}(V_1,  V_2)}
\renewcommand{\phi}{\varphi}
\renewcommand{\theta}{\vartheta}
\renewcommand{\epsilon}{\varepsilon}
\begin{document}

\title[Unique decomposition for bi-homogeneous polynomials]{A uniqueness result on the decompositions of a bi-homogeneous polynomial}

\author[E. Ballico]{Edoardo Ballico}
\address[Edoardo Ballico]{Dipartimento di Matematica,  Univ. Trento, Italy}
\email{edoardo.ballico@unitn.it }

\author[A. Bernardi]{Alessandra Bernardi}
\address[Alessandra Bernardi]{Dipartimento di Matematica, Univ.  Trento,  Italy}
\email{alessandra.bernardi@unitn.it}

\maketitle


\begin{abstract}
In the first part of this paper we  give a precise description of all the minimal decompositions of any bi-homogeneous polynomial  $p$ (i.e. a partially symmetric tensor of $S^{d_1}V_1\otimes S^{d_2}V_2$ where $V_1,V_2$ are two complex, finite dimensional vector spaces) if its rank with respect to the Segre-Veronese variety $S_{d_1,d_2}(V_1,V_2)$  is at most $\min \{d_1,d_2\}$. Such a polynomial may not have a unique minimal decomposition as $p=\sum_{i=1}^r\lambda_i p_i$ with $p_i\in S_{d_1,d_2}(V_1,V_2)$ and $\lambda_i$ coefficients, but we can show that  there exist unique $p_1, \ldots , p_{r'}$, $p_{1}', \ldots , p_{r''}'\in S_{d_1,d_2}(V_1,V_2) $, two unique linear forms $l\in V_1^*$, $l'\in V_2^*$, and two unique bivariate polynomials $q\in S^{d_2}V_2^*$ and $q'\in S^{d_1}V_1^*$ such that either $p=\sum_{i=1}^{r'} \lambda_i p_i+l^{d_1}q $ or $ p= \sum_{i=1}^{r''}\lambda'_i p_i'+l'^{d_2}q'$, ($\lambda_i, \lambda'_i$ being appropriate coefficients). 

In the second part of the paper we focus on the tangential variety of the Segre-Veronese varieties. We compute the rank of their tensors (that is valid also in the case of Segre-Veronese of more factors) and we describe the structure of the decompositions of the elements in the tangential variety of the two-factors Segre-Veronese varieties.
\end{abstract}


\section*{Introduction}

Let $V_1, V_2$ be vector spaces of dimension $n_i+1$ for $i=1,2$ defined over an algebraically closed vector field $K$ of characteristic zero. The space $S^{d_1}V_1\otimes S^{d_2}V_2$ is the space of partially symmetric tensors of type $T_1\otimes T_2 $ where $T_i\in S^{d_i}V_i$ is a completely symmetric tensor of order $d_i$ for $i=1, 2$. Since $S^{d_i}V_i$ can be interpreted also as the space of homogeneous polynomials of degree $d_i$ in the set of variables $\{x_{i,0}, \ldots , x_{i,k}\}$ defined over $K$, i.e. $S^{d_i}V_i^*\simeq K[x_{i,0}, \ldots , x_{i,n_i}]_{d_i}$ for $i=1, 2$, then the space  $S^{d_1}V_1^*\otimes S^{d_2}V_2^*$ represents also bi-homogeneous polynomials of type $p=p_1p_2$ with $p_i\in K[x_{i,0}, \ldots , x_{i,n_i}]_{d_i}$ for $i=1,2$.

The embedding of $\mathbb{P}(V_1)\times \mathbb{P}(V_2)$ into $\mathbb{P}(S^{d_1}V_1\otimes  S^{d_2}V_2)$ induced by the complete linear system $|\mathcal{O}_{\mathbb{P}(V_1)\times  \mathbb{P}(V_2)}(d_1,  d_2)|$ is the so called \emph{two factors Segre-Veronese  variety} and it is denoted by $\SVd$. It can be viewed as the variety parameterizing projective classes of partially symmetric tensors that can be written as:
$$T=v_1^{\otimes d_1}\otimes v_2^{\otimes d_2}$$
with $v_i\in V_i$ for $i=1, 2$.
In terms of multi-homogeneous polynomials,  $\SVd$ can be interpreted as the variety parameterizing projective classes of bi-homogeneous polynomial of type
$$\label{rank1p}p=l_1^{d_1} l_2^{d_2}$$
where $l_i$ are linear  forms in $S^1V_i^*\simeq K[x_{i,0}, \ldots , x_{i,n_i}]_1$, $i=1,2$.

We will say that an element of the Segre-Veronese variety has \emph{rank $1$}. The minimum integer $r$ such that  a bi-homogeneous polynomial $p$  (a two factors partially symmetric tensor $T$) can be written as a linear combination of $r$ rank 1 bi-homogeneous polynomials (two factors partially symmetric tensors) is called the \emph{rank} of $p$ and it is denoted by $r(p)$ (or $r(T)$ respectively). By an abuse of notation we will say that such an $r$ is also the \emph{rank} of the projective class $[p]$ of $p$  (the projective class $[T]$ of $T$ respectively).

\medskip

From now on we will indicate with $p$ both the bi-homogeneous polynomial and the corresponding  partially symmetric tensor.

\medskip

One of the main problems of the fieldwork on a minimal decomposition of a polynomial or of a tensor is the knowledge of its possible uniqueness or \emph{identifiability}. Many branches of pure and applied mathematics are nowadays very active in this field, see for example \cite{BC, amr, cov, co, b1, bc, bcv, ddl1, ddl2, ddl3, bbcc, bb3}.

Suppose that $\mathcal{W}\subset \PP^r$ is a non-degenerate reduced and irreducible projective variety and that a point $[p]\in \mathcal{W}$ lies on a $r$-secant space  $H\simeq \mathbb{P}^{r-1}$ to $\mathcal{W}$  and not on any $\mathbb{P}^{r-2}$ that is $(r-1)$-secant. 
A very general fact on the uniqueness of minimal decomposition is the following one.

\begin{definition}\label{rho} Let
$\rho ' (\mathcal{W})$ be the maximal integer $t$ such that any subset of $\mathcal{W}$ with cardinality $t$ is linearly independent.
\end{definition}

\noindent \textbf{General fact:}
If  $2r\le \rho '(\mathcal{W})$, then $H$ is the only one $r$-secant space to $\mathcal{W}$ containing $[p]$ (crf.  \cite[Theorem 1.18]{bgl}).

\smallskip

This fact, translated in terms of bi-homogeneous polynomials (or two factors  partially symmetric tensors), means that if $p\in S^{d_1}V_1^*\otimes S^{d_2}V_2^*$  is such that 
$$r(p)\leq \rho'(\SVd)$$ 
then $p$ has a unique minimal decomposition as 
\begin{equation}\label{decompPoly}p=\sum_{i}^r \lambda_i l_i^{d_1}l_i'^{d_2}\end{equation} 
with $l_i\in V_1^*, l'_i\in V_2^*$, $\lambda_i\in K$, $i=1, \ldots, r$.
\\
In terms of two factors partially symmetric tensors it means that 
\begin{equation}\label{decompTensor}p=\sum_{i=1}^r \lambda_i v_i^{\otimes d_1}\otimes v_i'^{\otimes d_2}\end{equation} 
with $v_i\in V_1, v_i'\in V_2$, $\lambda_i\in K$, $i=1, \ldots, r$.

If $X$ is the Segre-Veronese variety of $k$ factors (i.e. $X=\SV$  is the embedding of $\mathbb{P}(V_1)\times \cdots \times \mathbb{P}(V_k)$ into $\mathbb{P}(S^{d_1}V_1\otimes \cdots \otimes  S^{d_k}V_k)$ induced by the complete linear system $|\mathcal{O}_{\mathbb{P}(V_1)\times  \cdots \times \mathbb{P}(V_k)}(d_1,  \ldots , d_k)|$), then  we have that
$\rho '(X) = 1 +\min _{1\le i \le k} \{d_i\}$ (in absence of a standard reference for this quite obvious fact,  for sake of completeness, we give the proof in Lemma \ref{suck} at the end of Section \ref{SectionTwoFactors}).
Unfortunately this integer is quite low, but in the case of bi-homogeneous polynomials where 
$$\rho'(\SVd)=1+\min\{d_1,d_2\},$$
we may get a stronger uniqueness result (roughly by a factor $2$).
In the main result of this paper that is Theorem \ref{ee1.1} we show the exact structure of the unique minimal decomposition of a bi-homogeneous polynomial $p$ (order 2 partially symmetric tensor) with $r(p)\leq \rho'(\SVd)$. Moreover we can also prove that the same decomposition's structure holds for a bigger class of bi-homogeneous polynomials, namely it holds for any $p\in S^{d_1}V_1^*\otimes S^{d_2}V_2$ with $2r({p}) \le 1+d_1+d_2$ and $|d_1-d_2| \le 2$ (the case where $p$ is actually a homogeneous polynomial in only one set of variables is slightly different, we treat it separately, cfr. (\ref{third}) in Theorem \ref{ee1.1}, and we don't describe it here in the Introduction). In this last case the decomposition as sum of elements in $\SVd$ won't be unique anymore, but we have another kind of uniqueness. 
\\
In order to facilitate the reading of the first two items of Theorem \ref{ee1.1}, let us be more explicit here. In both cases, i.e. if either $2r({p}) \le 1+d_1+d_2$ and $|d_1-d_2| \le 2$ (except if $p$ is a homogeneous polynomial where the situation will be anyway explicitly described in (\ref{third}) of Theorem \ref{ee1.1} and Remark \ref{lll1}) or if $r({p}) \le \min \{d_1,d_2\}$ we show that there exist: 
\begin{itemize}
\item Unique $p_1, \ldots , p_{r'}, p_{1}', \ldots , p_{r''}'\in S_{d_1,d_2}(V_1,V_2) $, 
\item Two unique linear forms $l\in V_1^*$, $l'\in V_2^*$,
\item Two unique spaces of bi-variate linear forms $W_1^*=K[m,n]_1\subset V_1^*$, $W_2^*=K[m',n']_1\subset V_2^*$, with $m,n\in V_1^*,m',n'\in V_2^*$ linear forms, and
\item The following bivariate polynomials $q_1, \ldots , q_s \in S^{d_2}W_2^*$, $q_1', \ldots , q_{s'}' \in  S^{d_1}W_1^*$ 
\end{itemize}
such that either
$$p=\sum_{i=1}^{r'} \lambda_i p_i+l^{d_1}\cdot\left( \sum_{i=1}^s\gamma_iq_i \right),$$
or
$$p=\sum_{i=1}^{r''}\lambda'_i p_i'+\left(\sum_{i=1}^{s'}\gamma'_i q'_i\right)\cdot l'^{d_2}$$
with $\lambda_i, \lambda'_j, \gamma_k , \gamma'_h\in K$.
\\
If we are in case in which the bi-homogeneous polynomial  $p$ has at least two different decompositions (i.e. the $q_i$'s and the $q_i'$'s are not unique), then $2r({p}) >\min \{d_1,d_2\}$ and there are infinitely many choices for $\{q_1, \ldots , q_s\}\subset S^{d_2}W_2^*$ and infinitely many choices for $\{q_1', \ldots , q_{s'}'\}\subset S^{d_1}W_1^*$, but the forms 
 $q=\sum_{i=1}^s\gamma_iq_i\in S^{d_2}W_2^*$ and $q'=\sum_{i=1}^{s'}\gamma_iq_i'\in S^{d_1}W_1^*$ will be unique (more precisely, there are infinitely many choices if and only if there are at least two choices and this is the case if and only if either $s > \lfloor (d_1+1)/2\rfloor$ or $s > \lfloor (d_2+2)/2\rfloor$). Therefore, in this last case, we will have that either
$$p=\sum_{i=1}^{r'} \lambda_ip_i+l^{d_1}q,$$
or
$$p=\sum_{i=1}^{r''}\lambda'_ip_i'+q'l'^{d_2}$$
and all the forms appearing in the decomposition will be unique. In this sense we can speak of ``~unique decomposition~" of the bi-homogeneous polynomial $p$. Knowing either $q$ or $q'$, the finding of  $\{q_1, \ldots , q_s\}\subset S^{d_2}W_2^*$  or of  $\{q_1', \ldots , q_{s'}'\}\subset S^{d_1}W_1^*$ is assured by the classical study of bivariate polynomials
with rank bigger than their border rank due to the celebrated Sylvester's theorem (cfr. \cite{b}, \cite{cs}, \cite[\S 1.3]{ik}, \cite{l} and \cite{bgi,bcmt} for algorithmic computation of the solutions). 

\smallskip

We can rephrase 
all this in terms of two-factors partially-symmetric tensors. Let $p\in S^{d_1}V_1\otimes S^{d_2}V_2$.
If either $2r({p}) \le 1+d_1+d_2$ and $|d_1-d_2| \le 2$ (except again if $p$ is a completely symmetric tensor where the situation will be anyway explicitly described in (\ref{third}) of Theorem \ref{ee1.1} and Remark \ref{lll1}) or if $r({p}) \le \min \{d_1,d_2\}$ we show that there exist: 
\begin{itemize}
\item Unique vectors $v_{j,1}, \ldots , v_{j,r'},v_{j,1}', \ldots , v_{j,r''}'\in V_j$, for $j=1,2$,
\item Two unique vectors $u\in V_1$, $u'\in V_2$,
\item Two unique lines $W_1\subset V_1$, $W_2\subset V_2$ and
\item The following vectors $w_1, \ldots , w_s \in W_2$, $w_1', \ldots , w_{s'}' \in W_1$ 
\end{itemize}
such that either
$$p=\sum_{i=1}^{r'} \lambda_iv_{1,i}^{\otimes d_1}\otimes v_{2,i}^{\otimes d_2}+u^{\otimes d_1}\otimes\left(\sum_{i=1}^s \gamma_iw_i^{\otimes d_2}\right),$$
or
$$p=\sum_{i=1}^{r''}\lambda'_i v_{1,i}'^{\otimes d_1}\otimes v_{2,i}'^{\otimes d_2}+\left(\sum_{i=1}^{s'}\gamma'_iw_i^{\otimes d_1}\right)\otimes u'^{\otimes d_2}$$
with $\lambda_i, \lambda'_j, \gamma_k, \gamma'_h\in K$.
\\
If $p$ is a  two-factors partially-symmetric tensor without unique decomposition (i.e. $w_i$'s and $w_i'$'s are not unique), then
$r({p}) > \min \{d_1,d_2\}$, then there are infinitely many choices for  $\{w_1, \ldots , w_s\}\subset W_2$ and infinitely many choices for $\{w_1', \ldots , w_{s'}'\}\subset W_1$, but the tensors
 $w=\sum_{i=1}^s\gamma_iw_i^{\otimes d_2}\in S^{d_2}W_2$ and $w'=\sum_{i=1}^{s'}\gamma_iw_i'^{\otimes d_1}\in S^{d_1}W_1$ will be unique (more precisely, there are infinitely many choices if and only if there are at least two choices and this is the case if and only if either $s > \lfloor (d_1+1)/2\rfloor$ or $s > \lfloor (d_2+2)/2\rfloor$).  Therefore, in this last case, we will have that either
$$p=\sum_{i=1}^{r'} \lambda_iv_{1,i}^{\otimes d_1}\otimes v_{2,i}^{\otimes d_2}+u^{\otimes d_1}\otimes w,$$
or
$$p= \sum_{i=1}^{r''}\lambda'_iv_{1,i}'^{\otimes d_1}\otimes v_{2,i}'^{\otimes d_2}+w' \otimes u'^{\otimes d_2}$$
and all the tensors appearing in the decomposition will be unique. In this sense we can speak of ``~unique decomposition~" of the tensor $p$. As above,  knowing either $w$ or $w'$, the finding of  $\{w_1, \ldots , w_s\}\subset W_2^*$  or of  $\{w_1', \ldots , w_{s'}'\}\subset W_1^*$ is assured by the classical study of bivariate polynomials
with rank bigger than their border rank due to the celebrated Sylvester's theorem (cfr. \cite{b}, \cite{cs}, \cite[\S 1.3]{ik}, \cite{l} and \cite{bgi,bcmt} for algorithmic computation of the solutions).

\smallskip

It will be remarkable that the numbers $r'$ and $r''$ and the subspaces $W_1\subset V_1$ and $W_2\subset V_2$ will depend only on $[p]$ and not on the decomposition (this will be the content of Proposition \ref{ee7}).

\bigskip

In the second part of the paper we focus on tangential variety to Segre-Veronese variety. \smallskip

\smallskip

In Section \ref{RankTg} we will consider the Segre-Veronese variety $\SV$ of any number of factors.
\\
We will indicate with $r_{d_1, \ldots , d_k}(p)$ the minimum integer $r$ such that the projective class of the multi-homogeneous polynomial $p\in S^{d_1}V_1^* \otimes \cdots \otimes S^{d_k}V_k^*$ (the partially symmetric tensor $p\in S^{d_1}V_1\otimes \cdots \otimes S^{d_k}V_k$) can be written as a sum of elements in $\SV$. Since in this case there won't be any risk of confusion on the number of factors of the Segre-Veronese variety, by an abuse of notation we will call $r_{d_1, \ldots , d_k}(p)$ the \emph{rank of $p$}. 

In Section \ref{RankTg} we show that the rank of any point $[p]$ in the tangential variety of the Segre-Veronese of $k$-factors $ \tau(\SV)$ is $\sum_{i=1}^h d_i$ if $V_1^*=K[x_{1,0}, \ldots , x_{1,n_1}]_1$, $\ldots ,$ $V_h^*=K[x_{h,0}, \ldots , x_{h,n_h}]_1$ are the minimum sets of variables to which the multi-homogeneous polynomial $p$ actually depends on, $h\leq k$.  In terms of partially symmetric tensors this means that the tensor depends actually on $h\leq k$ factors: $p\in S^{d_1}V_1\otimes \cdots \otimes S^{d_h}V_h\subset S^{d_1}V_1\otimes \cdots \otimes S^{d_k}V_k$.

\smallskip

Finally in Section \ref{SectionTangential} we show that, if we keep focusing on the two-factors Segre-Veronese variety, then we are able to use all the mechanism that we have developed in Section \ref{SectionTwoFactors} to describe the structure of the decompositions of the elements in $\tau(\SVd)$. In Theorem \ref{i2} we show that the decomposition of an element $[p]\in \tau(\SVd)$ is always of type 
$$p=l_1^{d_1}\cdot\left( \sum_{i=1}^{r_1}\lambda_im_{i}^{d_2}\right)+ \left(\sum_{i=1}^{r_2}\gamma_in^{d_1}_i \right) \cdot l_2^{d_2}$$
with $r_1+r_2=r(p)$, $m_i\in K[l_1,l_1']_1$, $n_i\in K[l_2,l_2']$, binary linear forms ($l_i,l_i'$ are linear forms in $V_i^*$, $i=1,2$) and $\lambda_j, \gamma_k\in K$. This decomposition has the obvious two ``~exceptions~" of either $r_1=0$ or $r_2=0$ where only one of the two addenda appears in the decomposition.

This can be translated in terms of partially symmetric tensors by saying that any element of the tangential variety of the two factors  Segre-Veronese can be decomposed as
$$p=v_1^{\otimes d_1}\otimes \left( \sum_{i=1}^{r_1}\lambda_iw_{i}^{\otimes d_2}\right)+ \left(\sum_{i=1}^{r_2}\gamma_iu_i^{\otimes d_1}\right)\otimes v_2^{\otimes d_2}$$
where  $w_i\in \langle v_1, v_1'\rangle$ and $u_i\in \langle v_2, v_2'\rangle$ with $v_i,v_i'\in V_i$ for $i=1,2$, and $\lambda_j, \gamma_k\in K$, except if either $r_1=0$ or $r_2=0$ and then only one of the two addenda appears in the decomposition.


\section{A unique decomposition theorem for Segre-Veronese of two factors}\label{SectionTwoFactors}

Now denote by
$$\nu_{d_1,d_2} : \PP(V_1)\times  \PP(V_2) \to \PP(S^{d_1}V_1 \otimes  S^{d_2}V_2)$$
the Segre-Veronese embedding of bi-degree $(d_1,d_2)$  induced by the complete linear system $|\Oo _{ \PP(V_1)\times  \PP(V_2)}(d_1,d_2)|$.

\medskip

Since $\dim V_i=n_i+1$, all along this paper we will use indistinctly the notation $\mathbb{P}(V_i)=\mathbb{P}^{n_i}$.

\begin{definition}\label{SP}
For any $p\in S^{d_1}V_1\otimes S^{d_2}V_2$ let $\mathcal {S}({p})$ denote the set of all finite sets of points $S\subset \Y$
evincing $r({p})$, i.e. such that $[p]\in \langle \nu_{d_1,d_2} (S)\rangle$ and $\sharp (S) =r({p})$.
\end{definition}

\begin{definition}\label{DefAlphaBeta}
Let $[p_2]\in \mathbb{P}^{n_2}$ and $L$ be a line of $\mathbb{P}^{n_1}$, we call $L\times [p_2]\subset \mathbb{P}^{n_1}\times \mathbb{P}^{n_2}$ an \emph{$\alpha$-line}; while if we take $[p_1]\in \mathbb{P}^{n_1}$ and $L$ being a line of $\mathbb{P}^{n_2}$, we call $[p_1]\times L\subset \mathbb{P}^{n_1}\times \mathbb{P}^{n_2}$ a \emph{$\beta$-line}.  Moreover we will call $\PP^{n_1}\times [p_2]$ an \emph{$\alpha$-slice}, and $[p_1]\times \PP^{n_2}$ a \emph{$\beta$-slice}.
\end{definition}

\begin{notation}
In the sequel, the symbol ``~$\sqcup$ ~" indicates the disjoint union.
\end{notation}
 
\begin{theorem}[The decomposition theorem]\label{ee1.1}
Let $p \in S^{d_1}V_1 \otimes S^{d_2}V_2$ be a bi-homogeneous polynomial of rank $r(p)$ such that 
\begin{enumerate}[(a)]
\item\label{newa} either $2r({p}) \le 1+d_1+d_2$ and $|d_1-d_2| \le 2$,
\item\label{newb} or $r({p}) \le \min \{d_1,d_2\}$. 
\end{enumerate}
Assume that there exist two different sets of points $S, A$ evincing $r(p)$, i.e. $S,A\in \mathcal {S}({p})$. Then one of the following cases occurs:

\begin{enumerate}[(i)]

\item\label{primo} There are:

\begin{itemize} 
\item an integer $b$ with $2\le b \le (d_2+2)/2$ and $r({p}) \ge d_2+2 -b$,
\item a $\beta$-line $\mathcal{B}:= [p_1]\times L\subset \PP^{n_1}\times \mathbb{P}^{n_2}$, 
\item and a set of points $E\subset \Y$
\end{itemize} 
such that
$\sharp (E) = r({p}) +b-d_2-2$, $E\cap \mathcal{B} =\emptyset$,  $S = E\sqcup  \left(S\cap\mathcal{B}\right)$ and $A = E\sqcup \left(A\cap \mathcal{B}\right)$ (see Figure \ref{fig1});

\item\label{secondo} There are:

\begin{itemize}
\item an integer $b$ with $2\le b \le \lfloor (d_1+2)/2\rfloor $, $r({p}) \ge d_1+2-b$,
\item an $\alpha$-line $\mathcal{A}=R\times [p_2]\in \PP^{n_2}$, 
\item  and a set of points $F\subset \Y$
\end{itemize}
such that
$\sharp (F) = r({p}) +b-d_1-2$, $F\cap \mathcal{A}=\emptyset$,  $S = F\sqcup  \left(S\cap \mathcal{A}\right)$ and $A = F\sqcup \left(S\cap \mathcal{A}\right)$.

\item\label{third} We are in case (\ref{newa}) with $d_1\ne d_2$ and $p$ depends only on one factor. If $p$ depends only on the first factor, say $[p]$ is in the linear span of $\nu _{d_1,d_2}(\PP^{n_1}\times [p_2])$ with $[p_2]\in \PP^{n_2}$ and it is not as in the case (\ref{secondo}),
then $d_2>d_1$, $n_1\ge 2$, $r({p}) = d_1+1$, there is a plane $U\subseteq \PP^{n_1}$ and a reduced conic $C\subset U$, such that $[p]\in \nu _{d_1,d_2} (C\times \{p\})$ and all subsets of $\PP^{n_1}\times \PP^{n_2}$ evincing $r({p})$ are contained in $C$.

\end{enumerate}
In case (\ref{primo}) (resp. case (\ref{secondo})) there is a unique $Q\in \langle \nu_{d_1,d_2} (\mathcal{B})\rangle $ (resp. $Q\in \langle \nu_{d_1,d_2} (\mathcal{A})\rangle$) such that $r(Q) =d_2+2-b$
(resp.  $r(Q) =d_1+2-b$) and $[p]\in \langle \nu (E)\cup \{Q\}\rangle$ (resp. $[p]\in \langle \nu (F)\cup \{Q\}\rangle$) and for every
$M\in \Ss (Q)$ we have $E\cup M\in \Ss ({p})$ (resp. $F\cup M\in \Ss ({p})$) (see Figure \ref{fig2}).
\end{theorem}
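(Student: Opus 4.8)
The plan is to analyze the situation through the geometry of zero-dimensional schemes on $\Y$ and the interpolation theory of the line bundle $\Oo_{\Y}(d_1,d_2)$. Given $S, A \in \Ss(p)$ with $S \neq A$, the first step is to consider the scheme $Z := S \cup A$ (or the more refined $S \cup A$ with its scheme structure), which by hypothesis satisfies $\sharp(Z) \le 2r(p)$; the numerical assumptions (a) or (b) will force $\sharp(Z)$ to be small relative to the relevant Castelnuovo-Mumford-type bounds. Since $[p] \in \langle \nu_{d_1,d_2}(S)\rangle \cap \langle \nu_{d_1,d_2}(A)\rangle$ and both spans have dimension $r(p)-1$, minimality of the decompositions forces $h^1(\Ii_Z(d_1,d_2)) > 0$: the scheme $Z$ fails to impose independent conditions on forms of bidegree $(d_1,d_2)$. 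The heart of the argument is then a structure theorem for such minimally $h^1$-positive schemes $Z$ on a product of projective spaces.

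The second step is to apply a Bezout/restriction argument along the two rulings to locate where the failure of independent conditions is concentrated. The key dichotomy is whether a large portion of $Z$ lies on a single $\alpha$-line or a single $\beta$-line. Restricting $\Oo(d_1,d_2)$ to a $\beta$-line $\Bb = [p_1]\times L \cong \PP^1$ gives $\Oo_{\PP^1}(d_2)$, so a subscheme of $\Bb$ of length $\ge d_2+2$ already has $h^1 > 0$ there; an analogous statement with $d_1$ holds for $\alpha$-lines. Using the residuation exact sequence $0 \to \Ii_{\Res_{\Bb}(Z)}(d_1,d_2-1) \to \Ii_Z(d_1,d_2) \to \Ii_{Z\cap\Bb, \Bb}(d_2) \to 0$ (and its $\alpha$-analogue), together with the numerical hypotheses, one shows inductively that the $h^1$ must come from a single line: there is a $\beta$-line $\Bb$ with $\sharp(Z\cap\Bb) = b' \ge d_2+2 - (\text{something})$, and off $\Bb$ the residual scheme $E$ imposes independent conditions. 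This pins down $b$, the set $E = S \setminus \Bb = A \setminus \Bb$ (the equality being forced because the residual decompositions outside $\Bb$ must coincide by uniqueness in the independent range), and gives $\sharp(E) = r(p) + b - d_2 - 2$. The symmetric case produces (ii), and case (iii) is the degenerate leftover when $p$ is genuinely a form in one group of variables but does not fit (ii) — there the classical plane-conic description of binary-form-type rank jumps (Sylvester) takes over.

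For the final assertion about the unique $Q$: once we know $S = E \sqcup (S\cap\Bb)$ and $A = E\sqcup(A\cap\Bb)$ with $E$ common, we have $[p] \in \langle \nu(E) \cup \langle\nu(S\cap\Bb)\rangle\rangle$, so there is a point $Q$ in the line-span $\langle\nu_{d_1,d_2}(\Bb)\rangle \cong \PP^{d_2}$ with $[p] \in \langle \nu(E)\cup\{Q\}\rangle$; projecting from $\langle\nu(E)\rangle$ realizes $Q$ as a binary form of degree $d_2$ having both $S\cap\Bb$ and $A\cap\Bb$ among its length-$r(Q)$ apolar subschemes, and Sylvester's theorem on binary forms gives that $Q$ is uniquely determined and $r(Q) = d_2+2-b$ exactly (the border-rank-one-less-than-rank case). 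Finally, reversing the construction, for any $M \in \Ss(Q)$ the set $E\cup M$ spans a space containing $[p]$ and has cardinality $\sharp(E) + r(Q) = (r(p)+b-d_2-2) + (d_2+2-b) = r(p)$, so $E\cup M \in \Ss(p)$.

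\textbf{Main obstacle.} The delicate point is the concentration step: showing that the $h^1$-locus of $Z = S\cup A$ is forced onto a \emph{single} line of one ruling rather than being spread across several lines or a higher-dimensional subvariety, and simultaneously that the residual part $E$ is \emph{literally the same subset} of $S$ and of $A$. Controlling the interplay of the two residuation sequences (one for each ruling) under the asymmetric bound $|d_1-d_2|\le 2$ in case (a) — versus the cleaner bound in case (b) — is where the real work lies, and it is presumably handled by a careful case analysis on the values of $\sharp(Z\cap \ell)$ over all $\alpha$- and $\beta$-lines $\ell$, invoking the General Fact ($2r \le \rho'$) on the residual configurations to upgrade ``same span'' to ``same set''.
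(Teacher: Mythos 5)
Your outline follows the same route as the paper: set $Z:=S\cup A$, observe that $h^1(\Ii _Z(d_1,d_2))>0$ because $S\ne A$ and both span $[p]$, use residuation along the two rulings to force the failure of independence onto a single $\alpha$- or $\beta$-line, show the parts of $S$ and $A$ off that line coincide, and then split off a binary form $Q$ supported on the line and invoke Sylvester. The closing arithmetic $\sharp (E)+r(Q)=(r({p})+b-d_2-2)+(d_2+2-b)=r({p})$ is also exactly how the paper finishes. So the strategy is the right one.

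But the proposal is not a proof, because the step you yourself label the ``main obstacle'' is the actual mathematical content of the theorem, and you leave it at ``presumably handled by a careful case analysis.'' The paper needs three lemmas for it: a statement for schemes inside a $(1,1)$-divisor of $\PP^1\times \PP^1$ (Lemma \ref{b3}), a classification of minimally $h^1$-positive schemes of degree $\le a_1+a_2+2$ on $\PP^1\times \PP^1$ (Lemma \ref{1.7.1}, a genuinely delicate induction on $a_1+a_2$), and the lift to $\PP^{n_1}\times\PP^{n_2}$ (Lemma \ref{ba1}, a double induction on $n_1+n_2$ and $d_1+d_2$ using \cite[Lemma 34]{bgi}). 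Without these, nothing rules out that the obstruction sits on a $(1,1)$-curve or is only concentrated on a whole slice $[p_1]\times\PP^{n_2}$ rather than a line; the slice case is precisely where the paper's case (\ref{third}) with the reduced conic comes from (via Remark \ref{lll1} when $\deg (B\cap ([p_1]\times\PP^{n_2}))\ge 2d_2+2$), and your description of (\ref{third}) as a ``degenerate leftover'' does not derive the conic or the value $r({p})=d_1+1$. Likewise, upgrading ``same residual span'' to ``the same set $E$'' is not automatic from ``uniqueness in the independent range'': it requires $h^1(\Ii _{B\setminus B'}(d_1-1,d_2))=0$ together with \cite[Lemma 5.1]{bb2}, and one must separately exclude the non-vanishing case — this is exactly where the hypotheses $|d_1-d_2|\le 2$ and the parity of $1+d_1+d_2$ are used in the paper's Claims 2 and 3. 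As written, the proposal identifies the correct plan but omits the proofs of its two essential steps.
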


\begin{figure}[!h]\label{fig1}
\centering
\includegraphics[width=0.5\textwidth]{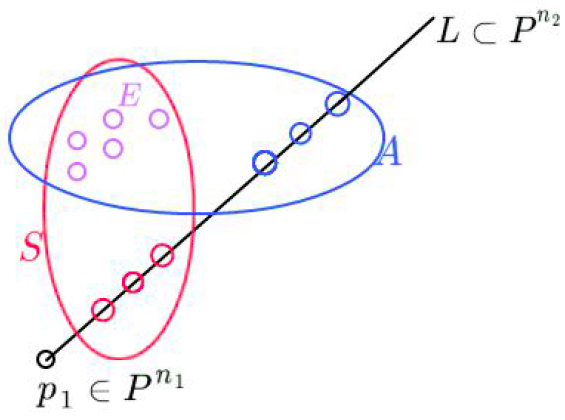} 
  \caption{\small{The schemes $S$ and $A$ computes the rank of $p$.  $S\cap A=E=\{\hbox{purple dots}\}$, $S=\{\hbox{red}+\hbox{purple dots}\}$, $A=\{\hbox{blue}+\hbox{purple dots}\}$.   In the figure we have dropped the ``~square brackets~" for $[p]$ to simplify the visualization.}}
\end{figure}

\begin{figure}[!h]\label{fig2}
\centering
\includegraphics[width=0.5\textwidth]{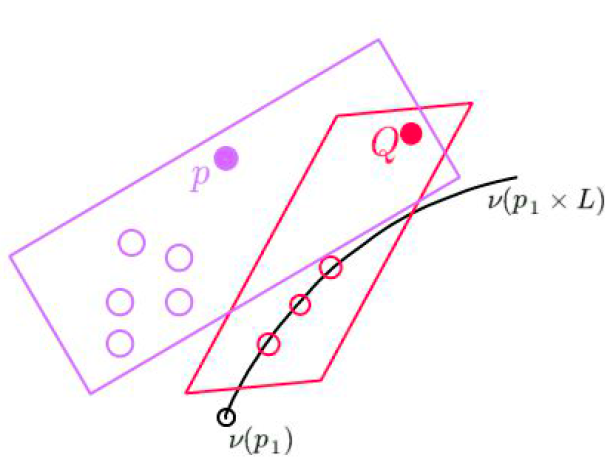} 
  \caption{\small{ In the figure we have dropped the ``~square brackets~" for $[p]$ and $[p_1]$ to simplify the visualization and $\nu$ stays for $\nu_{d_1, d_2}$. The point $[p]\in \langle \nu_{d_1,d_2}(E), Q\rangle$ where $ \nu_{d_1,d_2}(E)=\{\hbox{purple dots}\}$; $Q\in \langle \nu_{d_1,d_1}(S\setminus E)\rangle$ and $\nu_{d_1,d_1}(S\setminus E)=\{\hbox{red dots}\}$. There is a $g^1_{d_2+2-b}$ of points in $\nu_{d_1,d_2}([p_1]\times L)$ whose span contains $Q$ but such a $Q$ is unique as $E$ is in the decomposition of $p$.}}
\end{figure}

\begin{remark} By the classical result of Sylvester (see eg. \cite{cs,bgi, bcmt}) in cases (\ref{primo}) and (\ref{secondo}) the set $\mathcal {S}({p})$ is infinite.   Cases (\ref{primo}) and (\ref{secondo}) are mutually exclusive (see Remark \ref{usa3.01}).
\end{remark}

After having proved Theorem \ref{ee1.1}, we will show the uniqueness result that is described by the following proposition (case in which $p$ depends on both factors).

\begin{proposition}[Uniqueness of the decomposition]\label{ee7}
Take the assumptions of Theorem \ref{ee1.1} with $\sharp (\Ss ({p}))\ge 2$ and not in case (\ref{third}).

\begin{enumerate}[(a)]
\item To be in case (\ref{primo}) or in case (\ref{secondo}) and the value of the integer $b$ only depends on $p$, not on the
choice of $S,A\in \Ss ({p})$ with $S\ne A$.

\item
The $\beta$-curve 
$T =[p_1]\times L$ 
or  the $\alpha$-curve
$T = R\times [p_2]$
and the set $E:= S\setminus S\cap T =A\setminus A\cap T$ 
only depend on $[p]$, not the choice of $S, A\in \Ss ({p})$ with $S\ne A$.
\end{enumerate}
\end{proposition}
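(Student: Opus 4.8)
The plan is to prove Proposition \ref{ee7} by extracting, from the structural description already furnished by Theorem \ref{ee1.1}, the invariance of the data attached to two arbitrary minimal decompositions. Fix $[p]$ satisfying the hypotheses of Theorem \ref{ee1.1}, not in case \eqref{third}, with $\sharp(\Ss(p))\ge 2$. Pick $S,A\in\Ss(p)$ with $S\ne A$. Theorem \ref{ee1.1} tells us we are in case \eqref{primo} or case \eqref{secondo}; by the Remark following the theorem these are mutually exclusive \emph{for this choice of $S,A$}, and moreover the theorem produces an integer $b$, a curve $T$ ($\beta$-line $\mathcal B$ or $\alpha$-line $\mathcal A$) and a common residual set $E=S\setminus(S\cap T)=A\setminus(A\cap T)$ together with a unique point $Q\in\langle\nu_{d_1,d_2}(T)\rangle$ with $r(Q)=\deg+2-b$ (with $\deg\in\{d_1,d_2\}$ according to the case) such that $[p]\in\langle\nu_{d_1,d_2}(E)\cup\{Q\}\rangle$ and $E\cup M\in\Ss(p)$ for every $M\in\Ss(Q)$. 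The goal is to show $b$, $T$ and $E$ do not depend on the pair $(S,A)$.

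The key steps, in order. First I would reduce everything to the point $Q$ and the decomposition $[p]\in\langle\nu_{d_1,d_2}(E)\cup\{Q\}\rangle$. Since $r(Q)=\deg+2-b\ge 2$ and $Q$ lies on $\langle\nu_{d_1,d_2}(T)\rangle$ which is a rational normal curve of degree $\deg$ (the restriction of $\nu_{d_1,d_2}$ to the line $T$), Sylvester's theorem applies to $Q$: its rank exceeds its border rank, $\Ss(Q)$ is infinite, and $Q$ determines $T$ and $b$ (the border rank of $Q$ on that rational normal curve is $b$, and $T$ is the unique line whose image contains $Q$ in its span in that small a secant variety — a point of rank $\ge 2$ on $\PP(S^{d}W)$ with $W$ two-dimensional forces the binary form, hence the line $T$). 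So it suffices to prove that $Q$ itself — equivalently the pair $(E,Q)$ — is independent of $(S,A)$. Second, I would take a third decomposition $A'\in\Ss(p)$, $A'\ne S$, $A'\ne A$ (using $\sharp\Ss(p)\ge 2$ plus Sylvester infinitude to guarantee we have enough of them), and apply Theorem \ref{ee1.1} to the pairs $(S,A)$ and $(S,A')$, obtaining data $(b,T,E,Q)$ and $(b',T',E',Q')$. One then must show these coincide; the natural route is a dimension/linear-algebra count: $[p]$ lies in $\langle\nu_{d_1,d_2}(E)\cup\{Q\}\rangle\cap\langle\nu_{d_1,d_2}(E')\cup\{Q'\}\rangle$, and $E\sqcup(S\cap T)=S=E'\sqcup(S\cap T')$ forces relations between $E,E'$ and between $T,T'$. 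Here I would use the linear-independence input $\rho'(\SVd)=1+\min\{d_1,d_2\}$ together with the bound on $r(p)$ to argue that the part of $S$ lying off the line (namely $E$, resp. $E'$) is forced: any point of $S$ not on a binary-form locus compatible with $[p]$'s structure must lie in the intersection. Concretely, a point of $S$ contributes to the "spread-out" part $E$ exactly when it is not absorbed by the one-parameter family on $T$; showing this dichotomy is independent of the competing decomposition $A$ is where the work concentrates.

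The main obstacle, I expect, is exactly this last point: ruling out that two genuinely different choices of the auxiliary set $A$ could produce two different lines $T\ne T'$ (or two different residual sets $E\ne E'$) supporting the same $[p]$. The mechanism that should close it is that if $T\ne T'$ then $\langle\nu_{d_1,d_2}(T)\rangle$ and $\langle\nu_{d_1,d_2}(T')\rangle$ meet in at most a point (two rational normal curves of degree $\deg$ in general position), and the two expressions $[p]-\nu$-span$(E)\ni Q$, $[p]-\nu$-span$(E')\ni Q'$ would then force $E$ and $E'$ to jointly span something too large, contradicting $2r(p)\le\rho'(\SVd)$ — this is precisely the "General fact" quoted in the introduction, applied to the scheme $E\cup E'\cup(\text{small piece on }T)\cup(\text{small piece on }T')$, whose total length is controlled by $r(p)+b-\deg-2$ plus $b$ terms, i.e. $r(p)$, but counted against a linearly independent budget it forces collapse. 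I would also need the separate bookkeeping that in case \eqref{primo} versus case \eqref{secondo} the relevant degree is $d_2$ versus $d_1$ and the bounds $b\le(d_2+2)/2$ resp. $b\le\lfloor(d_1+2)/2\rfloor$ interact with $|d_1-d_2|\le 2$; this is routine once the linear-algebra core is in place. Finally, having shown $(E,Q)$ is an invariant of $[p]$, the statements of the Proposition follow immediately: $b$ is read off $Q$ via Sylvester, $T$ is the unique line carrying $Q$, and $E$ is given. I would present this as: (1) reduce to invariance of $(E,Q)$; (2) Sylvester recovers $(b,T)$ from $Q$; (3) a linear-independence argument using $2r(p)\le\rho'(\SVd)$ forces $(E,Q)$ to agree across decompositions; (4) conclude.
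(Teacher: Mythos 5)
Your overall skeleton matches the paper's: compare two pairs of evincing sets that share a common member (bridging genuinely disjoint pairs through intermediate ones, as the paper does explicitly in its second step), and note that once the residual set $E$ is known to be an invariant, the point $Q$ --- and with it $T$ and $b$ --- is recovered by concision and Sylvester. The problem is the mechanism you propose at the step you yourself call ``where the work concentrates.'' You close the case $T\ne T'$ by arguing that $E$ and $E'$ would ``jointly span something too large, contradicting $2r(p)\le \rho'(\SVd)$,'' invoking the General Fact. But $2r(p)\le \rho'(\SVd)=1+\min\{d_1,d_2\}$ is not among the hypotheses: Theorem \ref{ee1.1} only assumes $r(p)\le\min\{d_1,d_2\}$, or $2r(p)\le 1+d_1+d_2$ with $|d_1-d_2|\le 2$, which are much weaker. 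Worse, if $2r(p)\le\rho'(\SVd)$ did hold, the General Fact would force $\sharp(\Ss(p))=1$ and Proposition \ref{ee7} would be vacuous; the entire point of these statements is to operate beyond that threshold. The accompanying length bookkeeping (``$r(p)+b-\deg-2$ plus $b$ terms, i.e.\ $r(p)$'') also does not add up, since $\sharp(E)+b=r(p)+2b-d_2-2$ and the union $E\cup E'\cup(\text{pieces on }T,T')$ can have cardinality close to $2r(p)$, far above any linear-independence budget.

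What actually closes the gap --- and is what the paper does, tersely --- is a cardinality count on the shared evincing set rather than a span count. If the pairs $(S,A)$ and $(S,A')$ produce distinct lines $T\ne T'$, then $S$ contains $S\cap T$ with $\sharp(S\cap T)=d_2+2-b\ge (d_2+2)/2$ and likewise $S\cap T'$ with at least $(d_2+2)/2$ points (the mixed $\alpha$/$\beta$ case being excluded by Remark \ref{usa3.01}, or handled by the same count with $(d_1+d_2+2)/2$); since two distinct lines of $\PP^{n_1}\times\PP^{n_2}$ meet in at most one point, $r(p)=\sharp(S)\ge d_2+1$, contradicting $r(p)\le\min\{d_1,d_2\}$ (case (\ref{newa}) needs only a short extra check since there $r(p)\le d_2+1$ is possible). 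Hence $T=T'$, so $S\cap T=S\cap T'$, $E=E'$, and then $Q=Q'$ and $b=b'$ by the uniqueness in Claim 1 of the theorem's proof. With that substitution for your step (3), the rest of your outline goes through.
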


In the next subsection we collect all the proofs of Theorem \ref{ee1.1} and of Proposition \ref{ee7}.

\subsection{The proofs of Theorem \ref{ee1.1} and of Proposition \ref{ee7}}
Before giving the proof of Theorem \ref{ee1.1} we need some preliminary Lemma.

\begin{remark}\label{b5}
Fix  $(a_1,a_2)\in \NN ^2$, $T\in |\Oo _{\PP^1\times \PP^1} (1,0)|$ and a zero-dimensional scheme $Z\subset T$. Clearly $T\cong \PP^1$ and $\Oo _T(a_1,a_2)$ is a line bundle of degree
$a_2$. Hence
$h^1\left(T,\Ii _{Z,T}(a_1,a_2)\right) >0$ if and only if $\deg (Z)\ge a_2+2$. Since $h^i\left(\Oo _{\PP^1\times \PP^1}(a_1-1,a_2)\right) =0$ for $i=1,2$, we have $h^1\left(T,\Ii _{Z,T}(a_1,a_2)\right) >0$ if and only if
$h^1\left(\Ii _Z(a_1,a_2)\right) >0$.
\end{remark}

\begin{lemma}\label{b3}
Fix $F\in |\mathcal{O}_{\PP^1\times \PP^1}(1,1)|$ and integers $(a_1,a_2)\in \NN^2$. Let $Z\subset F$. We have $h^1(\Ii _Z(a_1,a_2)) >0$ if and only if either $\deg (Z)\ge a_1+a_2+2$ or
there is a proper subcurve $G$ of $F$, say of type $(e_1,e_2)$, with $\deg (Z\cap G) \ge e_2a_1+e_1a_2+2$.
\end{lemma}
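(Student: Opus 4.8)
The plan is to analyze when a zero-dimensional scheme $Z$ lying on a smooth quadric $F \in |\Oo_{\PP^1 \times \PP^1}(1,1)|$ fails imposing independent conditions on $|\Oo(a_1,a_2)|$. First I would reduce the cohomology computation from the ambient $\PP^1 \times \PP^1$ to the curve $F$ itself. Since $F \cong \PP^1$ and $Z \subset F$, there is a twisted ideal-sheaf sequence $0 \to \Oo_{\PP^1\times\PP^1}(a_1-1,a_2-1) \to \Ii_Z(a_1,a_2) \to \Ii_{Z,F}(a_1,a_2) \to 0$, and because $h^i(\Oo_{\PP^1\times\PP^1}(a_1-1,a_2-1)) = 0$ for $i=1,2$ (Künneth, as $a_i - 1 \ge -1$), we get $h^1(\Ii_Z(a_1,a_2)) = h^1(F, \Ii_{Z,F}(a_1,a_2))$. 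Now $\Oo_F(a_1,a_2)$ is a line bundle of degree $a_1 + a_2$ on $F \cong \PP^1$, so $h^1(F, \Ii_{Z,F}(a_1,a_2)) > 0$ if and only if $\deg Z \ge a_1 + a_2 + 2$. This already gives the theorem when $F$ is smooth (irreducible), matching the first alternative of the statement; the proper subcurve alternative is vacuous here since a smooth $F$ has no proper subcurves of the stated type with the relevant positivity.

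The substantive case is when $F$ is reducible, i.e. $F = G \cup G'$ with $G$ of type $(1,0)$ and $G'$ of type $(0,1)$ (the only decomposition of a $(1,1)$-divisor), so $G \cap G'$ is a single reduced point, call it $o$. Here I would again pass to $F$ via the same sequence, reducing to $h^1(F,\Ii_{Z,F}(a_1,a_2)) > 0$. On the nodal curve $F$, I would split $Z$ according to its components. Write $Z_G = Z \cap G$ (scheme-theoretically, the part of $Z$ supported on $G$, residuating appropriately) and similarly $Z_{G'}$; more carefully I would use a residuation exact sequence $0 \to \Ii_{Z:G, G'}(a_1-1,a_2) \to \Ii_{Z,F}(a_1,a_2) \to \Ii_{Z\cap G, G}(a_1,a_2) \to 0$ or its symmetric counterpart, and track the degree of the line bundles: $\Oo_G(a_1,a_2)$ has degree $a_2$ on $G \cong \PP^1$, while $\Oo_{G'}(a_1,a_2)$ has degree $a_1$. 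Then I would use Remark \ref{b5} (the $\PP^1 \times \PP^1$ analogue for a ruling) componentwise: $h^1$ of the restriction to $G$ is positive iff $\deg(Z \cap G) \ge a_2 + 2$, and to $G'$ iff $\deg(Z \cap G') \ge a_1 + 2$. Combining through the residuation sequence and handling the node $o$ carefully (whether or not $o \in \op{Supp}(Z)$, and the length contributed there), the condition $h^1(\Ii_{Z,F}(a_1,a_2)) > 0$ becomes: either the total degree is large enough ($\deg Z \ge a_1 + a_2 + 2$), or the restriction to one component $G$ (resp. $G'$) already overloads that component, i.e. $\deg(Z\cap G) \ge a_2 + 2$ (resp. $\ge a_1 + 2$). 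Writing a component $G$ of type $(e_1,e_2) = (1,0)$, the bound $e_2 a_1 + e_1 a_2 + 2 = a_2 + 2$ is exactly this, and symmetrically for $G'$ of type $(0,1)$.

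For the converse directions I would argue: if $\deg Z \ge a_1+a_2+2$ then the degree-$(a_1+a_2)$ line bundle $\Oo_F(a_1,a_2)$ cannot impose independent conditions, giving $h^1 > 0$; if instead there is a proper subcurve $G$ with $\deg(Z\cap G)$ exceeding $\deg \Oo_G(a_1,a_2) + 1$, then already $h^1(G, \Ii_{Z\cap G,G}(a_1,a_2)) > 0$, and by the surjection in the residuation sequence $h^1(F, \Ii_{Z,F}(a_1,a_2)) > 0$ as well. Pulling back up to $\PP^1\times\PP^1$ via the vanishing of the higher cohomology of $\Oo(a_1-1,a_2-1)$ finishes it.

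I expect the main obstacle to be the bookkeeping at the node $o$ in the reducible case: when $Z$ has support at $o$, one must be careful about how the length of $Z$ localized at $o$ splits between the residual scheme $Z:G$ on $G'$ and the trace $Z\cap G$ on $G$, and which of the two inequalities $\deg(Z\cap G) \ge a_2+2$, $\deg(Z\cap G') \ge a_1 + 2$ one can arrange. The clean way is to choose the residuation direction adaptively (residuate by whichever component carries more of $Z$ at $o$), and to note that if \emph{neither} componentwise overload occurs then the two traces together have degree $\le (a_2+1) + (a_1+1) = a_1 + a_2 + 2$, but with at least the node's contribution counted once, forcing $\deg Z \le a_1 + a_2 + 1$ unless the total-degree bound is met — which closes the logical loop. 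I would also double-check the edge cases $a_1 = 0$ or $a_2 = 0$ separately, since then one of the component line bundles has degree $0$ and "imposing independent conditions" degenerates.
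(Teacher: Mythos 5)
Your proposal follows essentially the same route as the paper: reduce $h^1(\Ii_Z(a_1,a_2))$ to $h^1(F,\Ii_{Z,F}(a_1,a_2))$ using the vanishing of the cohomology of $\Oo_{\PP^1\times\PP^1}(a_1-1,a_2-1)$, settle the irreducible case by the degree count on the genus-zero curve $F$, and in the reducible case $F=T+G$ combine the residual exact sequences of the two components with Remark \ref{b5}. The one step that does not work as written is your final ``closing of the logical loop'': the sentence deriving ``$\deg Z\le a_1+a_2+1$ unless the total-degree bound is met'' only produces an upper bound on $\deg(Z)$ and never invokes $h^1>0$, so it does not establish the implication ($h^1>0$ and no component overloaded) $\Rightarrow$ $\deg(Z)\ge a_1+a_2+2$. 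The correct combination, which is what the paper does, is: if neither trace is overloaded, then the trace terms of \emph{both} residual sequences have vanishing $h^1$, hence $h^1(\Ii_{\Res_T(Z)}(a_1-1,a_2))>0$ and $h^1(\Ii_{\Res_G(Z)}(a_1,a_2-1))>0$; since $\Res_T(Z)\subset G$ and $\Res_G(Z)\subset T$, Remark \ref{b5} gives the \emph{lower} bounds $\deg(\Res_T(Z))\ge a_1+1$ and $\deg(\Res_G(Z))\ge a_2+1$, and then the additivity $\deg(Z)=\deg(\Res_G(Z))+\deg(Z\cap G)$ together with $\Res_T(Z)\subseteq Z\cap G$ yields $\deg(Z)\ge (a_2+1)+(a_1+1)=a_1+a_2+2$. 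With that replacement (and it uses only tools you already listed), your argument coincides with the paper's proof; the reduction to $F$, the irreducible case, and the ``if'' direction via monotonicity of $h^1$ under inclusion of subschemes are all as in the paper.
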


\begin{proof}
The ``~if~'' part is true, because if $Z'\subseteq Z$, then $h^1(\Ii _{Z'}(a_1,a_2)) \le h^1(\Ii _Z(a_1,a_2))$ and $h^1(\Ii _Z(a_1,a_2)) =h^1(F,\Ii _{Z,F}(a_1,a_2))$.
\\
If $F$ is integral, then the lemma is obvious, because the arithmetic genus of $F$ is 0 
and $\deg (\Oo _F(a_1,a_2)) =a_1+a_2$. 
\\
Therefore assume $F=T+G$ with $T\in | \mathcal{O}_{\PP^1\times \PP^1}(1,0)|$ and $G\in |\mathcal{O}_{\PP^1\times \PP^1} (0,1)|$. If $\deg (Z\cap T)\le a_2+1$, then a residual sequence
$$0 \to \Ii _{\Res_T(Z)}(a_1-1,a_2) \to \Ii _{Z}(a_1,a_2) \to \Ii _{Z\cap T, T}(a_1,a_2) \to 0
$$
gives $h^1\left(\Ii _{\Res_T(Z)}(a_1-1,a_2)\right) >0$. Since $\Res _T(Z)\subset G$, Remark \ref{b5} gives $\deg (\Res _T(Z)) \ge a_1+1$ and hence $\deg (G\cap Z)\ge a_1+1$.
\\Similarly, by using the other exact sequence
$$0 \to \Ii _{\Res_G(Z)}(a_1,a_2-1) \to \Ii _{Z}(a_1,a_2) \to \Ii _{Z\cap G, G}(a_1,a_2) \to 0
$$
we get $\deg (\Res _G(Z)) \ge a_2+1$. 

Hence $\deg (Z)=\deg (\Res _G(Z))+\deg (Z\cap G)\ge a_1+a_2+2$.
\end{proof}

In Lemma \ref{ba1} we will need to perform and inductive procedure. The first step of the induction will be a consequence of the following lemma.

\begin{lemma}\label{1.7.1}
Fix $(a_1,a_2)\in \NN^2$. Let $Z\subset \PP^1\times \PP^1$ be a zero-dimensional scheme such that $\deg (Z)\le a_1+a_2+2$, $h^1\left(\Ii _{Z'}(a_1,a_2)\right) =0$
for every $Z'\subsetneq Z$ and $h^1\left(\Ii _Z(a_1,a_2)\right) >0$.
Then \begin{itemize}
\item
Either $\deg (Z)=a_2+2$ and there is $[o]\in \PP^1$ with $Z\subset \PP^1\times \{[o]\}$,
\item Or $\deg (Z)=a_1+2$ and there is $[q]\in \PP^1$
such that $Z\subset \{[q]\}\times \PP^1$,
\item Or $\deg (Z) =a_1+a_2+2$ and there is $F\in  |\mathcal{O}_{\PP^1\times \PP^1}(1,1)|$
such that $Z\subset F$,
\item Or $a_1 =0$ and $\deg (Z)= a_2+2$ or $a_2=0$ and $\deg (Z) =a_1+2$.
\end{itemize}
\end{lemma}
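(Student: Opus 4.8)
The plan is to analyze the minimal "non-speciality" scheme $Z$ on the smooth quadric $Q = \PP^1\times\PP^1$ by splitting into cases according to whether a line of one of the two rulings meets $Z$ in enough points to already be special. First I would dispose of the degenerate cases $a_1=0$ or $a_2=0$: if, say, $a_1=0$, then $\Oo_Q(0,a_2)$ is pulled back from the second factor, so $h^1(\Ii_Z(0,a_2))>0$ forces the image of $Z$ under the second projection to have degree $\ge a_2+2$; minimality then forces $\deg(Z)=a_2+2$, which is the last bullet (and symmetrically for $a_2=0$). So from now on assume $a_1,a_2\ge 1$.

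Next I would run the residuation argument exactly as in Lemma \ref{b3}. For a line $T\in|\Oo_Q(1,0)|$, consider
$$0 \to \Ii_{\Res_T(Z)}(a_1-1,a_2) \to \Ii_Z(a_1,a_2) \to \Ii_{Z\cap T,T}(a_1,a_2) \to 0.$$
If for some $T$ we have $\deg(Z\cap T)\ge a_2+2$, then by Remark \ref{b5} the subscheme $Z\cap T$ is itself special for $(a_1,a_2)$; minimality of $Z$ forces $Z=Z\cap T\subset T$ and $\deg(Z)=a_2+2$, giving the first bullet. Symmetrically, if some $T'\in|\Oo_Q(0,1)|$ has $\deg(Z\cap T')\ge a_1+2$, we land in the second bullet. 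So we may assume $\deg(Z\cap T)\le a_2+1$ for every $(1,0)$-line and $\deg(Z\cap T')\le a_1+1$ for every $(0,1)$-line. In that situation the cohomology of line bundles on $\PP^1$ makes the restriction maps in both displayed sequences surjective on $H^1$, so $h^1(\Ii_Z(a_1,a_2))>0$ forces $h^1(\Ii_{\Res_T(Z)}(a_1-1,a_2))>0$ and likewise on the other side; iterating, $\deg(\Res_T(Z))\ge a_1+1$ and $\deg(\Res_{T'}(Z))\ge a_2+1$, whence (as in Lemma \ref{b3}) $\deg(Z)\ge a_1+a_2+2$. Combined with the hypothesis $\deg(Z)\le a_1+a_2+2$ this pins $\deg(Z)=a_1+a_2+2$.

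It remains to show, in this last case, that $Z$ lies on a curve $F\in|\Oo_Q(1,1)|$. The section space $H^0(\Oo_Q(1,1))$ has dimension $4$, and the condition "$F$ contains $Z$" is a linear condition on this space; I would argue that imposing $Z$ on $(1,1)$-forms fails to be independent, i.e. $h^0(\Ii_Z(1,1))\ge 1$, by a Castelnuovo-type inequality linking $h^1(\Ii_Z(a_1,a_2))>0$ (with $Z$ minimal of degree $a_1+a_2+2$, and $\deg(Z\cap \ell)$ bounded as above on every ruling line) to $h^1(\Ii_Z(1,1))>0$, and then to $h^0(\Ii_Z(1,1))>0$ since $\deg(Z)=a_1+a_2+2\le$ the relevant bound only when... — more cleanly, I would induct downward on $(a_1,a_2)$: the residual sequences above, together with the already-established bounds $\deg(Z\cap T)\le a_2+1$, $\deg(Z\cap T')\le a_1+1$, let one peel off one line from each ruling and reduce $(a_1,a_2)$ to $(a_1-1,a_2-1)$ while controlling degrees, the base case being $(1,1)$ where $\deg(Z)=4$ special on a quadric visibly forces collinearity on a $(1,1)$-curve (four points, or a length-$4$ scheme, failing to impose independent conditions on the pencil-free $|\Oo(1,1)|$, must lie on a member of it). The main obstacle I anticipate is precisely this last step: controlling the scheme structure (not just the support) of $Z$ through the residuation so that one may legitimately conclude $Z\subset F$ rather than merely that the reduction of $Z$ does; handling non-reduced $Z$, and the borderline sub-cases where a residual scheme could split its mass unevenly between the two rulings, is where the careful bookkeeping lives. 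I would lean on Lemma \ref{b3} itself, applied to the residual schemes, to keep the argument honest.
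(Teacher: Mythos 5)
Your opening moves are sound and coincide with the paper's: the degenerate cases $a_1=0$ or $a_2=0$, and the observation that if some ruling line $T$ meets $Z$ in degree $\ge a_2+2$ (resp.\ $a_1+2$) then minimality forces $Z=Z\cap T\subset T$ and $\deg (Z)=a_2+2$ (resp.\ $a_1+2$). The gap is in the remaining branch, where you try to force $\deg (Z)\ge a_1+a_2+2$ by residuating separately along a $(1,0)$-line and a $(0,1)$-line. First, $h^1\left(\Ii _{\Res_T(Z)}(a_1-1,a_2)\right)>0$ does not by itself give $\deg (\Res_T(Z))\ge a_1+1$: in general it only gives $\deg (\Res_T(Z))\ge 2+\min \{a_1-1,a_2\}$, since a minimal special scheme for $(a_1-1,a_2)$ may sit on a line of the \emph{other} ruling. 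Second, and fatally, even granting both bounds $\deg (\Res_T(Z))\ge a_1+1$ and $\deg (\Res_{T'}(Z))\ge a_2+1$, you cannot add them: $\deg (Z)=\deg (Z\cap T)+\deg (\Res_T(Z))$ holds for a \emph{single} $T$, and the residual schemes with respect to lines in different rulings overlap in almost all of $Z$, so your two inequalities only yield $\deg (Z)\ge \max \{a_1,a_2\}+1$. The ``as in Lemma \ref{b3}'' step works there only because $Z\subset T\cup G$, so that $\Res_G(Z)\subset T$ and the two pieces genuinely partition $Z$; here the containment of $Z$ in a $(1,1)$-curve is exactly the conclusion you are after, so invoking it is circular. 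The same difficulty resurfaces in your last step: a $(1,1)$-curve through $\Res_{T+T'}(Z)$ need not contain $Z\cap (T\cup T')$, so the downward induction you sketch does not propagate the containment back to $Z$ itself — as you partly anticipate.

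The device you are missing, and which the paper uses to close both gaps at once, is to induct on $a_1+a_2$ and residuate with respect to a curve $D\in |\Oo _{\PP^1\times \PP^1}(1,1)|$ chosen so that $\deg (D\cap Z)$ is \emph{maximal}. Since $h^0\left(\Oo _{\PP^1\times \PP^1}(1,1)\right)=4$, one automatically has $\deg (D\cap Z)\ge 3$, hence $\deg (\Res_D(Z))\le (a_1-1)+(a_2-1)+1$, and the inductive hypothesis applies to a minimal special subscheme of $\Res_D(Z)$ for $(a_1-1,a_2-1)$; a case analysis then shows that every branch with $Z\nsubseteq D$ contradicts either the degree hypothesis or the maximality of $\deg (D\cap Z)$ (or of $\deg (L\cap Z)$ for a suitable ruling line $L$), leaving only $Z\subset D$, at which point Lemma \ref{b3} delivers all the bullets simultaneously. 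Replacing your two single-ruling residuations by this maximal $(1,1)$-residuation is the essential missing idea.
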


\begin{proof}
We use induction on $a_1+a_2$, the starting case of the induction being the trivial case $a_1=a_2=0$.

First assume $a_1=0$. If there is $[q]\in \PP^1$ with $\deg (Z\cap (\{[q]\} \times \PP^1)) \ge 2$, then
we are done, because $h^1\left(\Ii _{Z\cap (\{[q]\}\times \PP^1)}(0,a_2)\right) >0$.\\ Now assume $\deg (Z\cap (\{[q]\}\times \PP^1)) \le 1$ for all
$[o]\in \PP^1$. In this case the projection on the first factor $\pi _1:\mathbb{P}^1\times \mathbb{P}^1 \rightarrow \mathbb{P}^1$  induces an embedding of $Z$ into $\PP^1$ and we use that $h^1\left(\PP^1,\Ii _{\pi _1(Z),\PP^1}(a_2)\right) >0$ if and only if $\deg (\pi _1(Z))\ge a_2+2$.

Clearly the case $a_2=0$ is analogous.

Now assume $a_1>0$ and $a_2>0$. Fix $D\in |\Oo _{\PP^1\times \PP^1}(1,1)|$ such that $
\deg (D\cap Z)$ is maximal. If $Z\subset D$, then we apply Lemma \ref{b3} taking $F:= D$ if $\deg (Z)=a_1+a_2+2$.
Hence we may assume $Z\nsubseteq D$. 

Since
$Z\cap D\subsetneq Z$, we have $h^1\left(\Ii _{Z\cap D}(a_1,a_2)\right) =0$ and hence $h^1\left(D,\Ii _{Z\cap D}(a_1,a_2)\right)=0$. The residual exact sequence of $D$ in $\PP^1\times \PP^1$
$$0 \to \Ii _{\Res_D(Z)}(a_1-1,a_2-1) \to \Ii _{Z}(a_1,a_2) \to \Ii _{Z\cap D, D}(a_1,a_2) \to 0
$$
gives $h^1\left(\Ii _{\Res _D(Z)}(a_1-1,a_2-1)\right) >0$. We have $\deg ({\Res _D(Z)}) =\deg (Z)-\deg (D\cap Z)$.
Since $h^0\left(\Oo _{\PP^1\times \PP^1}(1,1)\right) =4$, we have 
$
\deg (D\cap Z)\ge 3$. Hence $\deg ({\Res _D(Z)}) \le (a_1-1)+(a_2-1)+1$. Let $W\subseteq \Res _D(Z)$ be a minimal subscheme such that $h^1\left(\Ii _W(a_1-1,a_2-1)\right) >0$.
Since $\deg (W)\le  (a_1-1)+(a_2-1)+1$, the inductive assumption gives that
\begin{enumerate}[(a)]
\item Either $a_1=1$ and $\deg (W) \ge (a_2-1)+2$,
\item Or $a_2=1$ and $\deg (W)\ge (a_1-1)+2$, 
\item Or $a_1\ge 2$ and there is $[o]\in \PP^1$ with $\deg (W\cap \PP^1\times \{[o]\}) \ge a_2+1$, 
\item Or $a_2\ge 2$ and there is $[q]\in \PP^1$
such that $\deg (W\cap \{[q]\}\times \PP^1)\ge a_1+1$.
\end{enumerate}

Note that in each case the inequality holds if we take $\Res _D(Z)$ instead of $W$.

First assume $a_1=1$ and $\deg ({\Res _D(Z)}) \ge a_2+1$. Since $\deg (Z) \le a_1+a_2+2 =a_2+3$, we get $\deg (Z\cap D)\le 2$, a contradiction.

In the same way we conclude if $a_2=1$ and $\deg ({\Res _D(Z)}) \ge a_1+1$.

Now assume $a_1\ge 2$ and the existence of $[o]\in \PP^1$ with $\deg \left({\Res _D(Z)}\cap \PP^1\times \{[o]\}\right) \ge a_2+1$. 
$\hbox{Set }R:= \PP^1\times \{[o]\}\in |\Oo _{\PP^1\times \PP^1}(0,1)|$.
If $\deg (R\cap Z)\ge a_2+2$, then we are
done. Hence we may assume $\deg (R\cap Z)\le a_2+1$. Since $Z\supset {\Res _D(Z)}$ and $\deg ({\Res _D(Z)}\cap R) \ge a_2+1$, we get $Z\cap R =Z\cap {\Res _D(Z)}$ and
$\deg (Z\cap R) =a_2+1$. 

We have $\deg ({\Res _R(Z)}) \le a_1+1$. The residual exact sequence of $R$
$$0 \to \Ii _{\Res_R(Z)}(a_1,a_2-1) \to \Ii _{Z}(a_1,a_2) \to \Ii _{Z\cap R, R}(a_1,a_2) \to 0
$$
 gives
$h^1\left(\Ii _{\Res _R(Z)}(a_1,a_2-1)\right) >0$. 

Let $W'\subseteq \Res _R(Z)$ be a minimal subscheme with $h^1\left(\Ii _{W'}(a_1,a_2-1)\right) >0$. The inductive assumption gives $\deg ({\Res _R(Z)}) \ge \deg (W')
\ge 2+\min \{a_1,a_2-1\}$. Since $\deg ({\Res _R(Z)}) \le a_1+1$, we get $a_2\le a_1-2$. Take $L\in |\Oo _{\PP^1\times \PP^1}(1,0)|$ such that $\deg (L\cap Z )$ is maximal. Since
$h^1\left(\Ii _{Z'}(a_1,a_2)\right) =0$ for all $Z'\subsetneq Z$, either $Z\subset L$ (and hence $\deg (Z) =a_2+2$ and the lemma is true) or $\deg (L\cap Z) \le a_2+1$. We
may assume that $\deg (L\cap Z) \le a_2+1$ and hence
$h^1\left(L,\Ii _{L\cap Z}(a_1,a_2)\right)=0$. The residual exact sequence of $L$ gives $h^1\left(\Ii _{\Res_L(Z)}(a_1-1,a_2)\right) >0$.
Let $W_1\subseteq \Res _L(Z)$ be a minimal subscheme such that $h^1\left(\Ii _{W_1}(a_1-1,a_2)\right) >0$. Since $a_1-1> 0$, the inductive assumption gives
that either $\deg (W_1) =a_1+1$ and $W_1$ is contained in $R_1\in |\Oo _{\PP^1\times \PP^1}(1,0)|$ or $\deg (W_1) =a_2+2$ and $W_1$ is contained in an element of
$|\Oo _{\PP^1\times \PP^1}(0,1)|$ or $\deg (W_1) = a_1+a_2+1$ and $W_1$ is contained in an element of $|\Oo _{\PP^1\times \PP^1}(1,1)|$. In the latter case
we get $\deg (D\cap Z) \ge a_1+a_2+1$ and so $\deg (\Res _D(Z)) \le 1$ and hence $h^1(\Ii _{\Res _D(Z)}(a_1-1,a_2-1)) =0$, a contradiction. In the second
case we get that we are in the first case of the lemma. Now assume the existence of $R_1\in |\Oo _{\PP^1\times \PP^1}(1,0)|$ such that $\deg (R_1\cap W_1) \ge a_1+1$.
Since $W_1\subseteq \Res _L(Z)\subset Z$, the maximality property of the integer $\deg (L\cap Z)$ gives $\deg (L\cap Z) \ge a_1+1$. Therefore $\deg (Z) \ge \deg (L\cap Z)+\deg (\Res _L(Z))
\ge 2a_1+2$, contradicting the inequalities $a_2\le a_1-2$ and $\deg (Z)\le a_1+a_2+2$.

The same proof works if $a_2\ge 2$ and there is $[q]\in \PP^1$ such that $\deg ({\Res _F(Z)}\cap \{Q\}\times \PP^1)\ge a_1+1$.
\end{proof}

\begin{lemma}\label{ba1}
Let  $\Gamma\subset \PP^{n_1}\times \PP^{n_2}$ be  zero-dimensional scheme such that $\deg (\Gamma)\le d_1+d_2+1$, $h^1(\Ii _{\Gamma '}(d_1,d_2)) =0$ for
all $\Gamma '\subsetneq \Gamma$ and $h^1(\Ii _\Gamma(d_1,d_2)) >0$ with $ d_1,d_2>0$. Then either there is $[p_1]\in \PP^{n_1}$ such that $\Gamma \subset [p_1]\times \PP^{n_2}$ or there is $[p_2]\in \PP^{n_2}$ such that $h^1(\Ii _{\PP^{n_1}\times [p_2]}(d_1,d_2)) >0$. If the second case occurs and $d_2\ge d_1$, then $\deg (\Gamma )=d_2+2$ and there
is a $\beta$-line $T$ such that $\Gamma \subset T$.
\end{lemma}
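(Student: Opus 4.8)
The plan is to reduce the multiprojective statement on $\PP^{n_1}\times\PP^{n_2}$ to the bihomogeneous $\PP^1\times\PP^1$ situation already analyzed in Lemma \ref{1.7.1}, by finding an appropriate ambient $\PP^1\times\PP^1$ (or $\PP^1\times\PP^{n_2}$, $\PP^{n_1}\times\PP^1$) that contains $\Gamma$. First I would use a standard ``concision'' argument: since $\Gamma$ is zero-dimensional of small degree, there is a smallest multiprojective subspace $\PP^{m_1}\times\PP^{m_2}\subseteq\PP^{n_1}\times\PP^{n_2}$ (a product of linear subspaces) containing $\Gamma$, and the cohomological hypotheses $h^1(\Ii_{\Gamma'}(d_1,d_2))=0$ for $\Gamma'\subsetneq\Gamma$, $h^1(\Ii_\Gamma(d_1,d_2))>0$ are inherited on $\PP^{m_1}\times\PP^{m_2}$ because the restriction maps on the relevant line bundles are surjective (the ideal sheaf of a product of linear spaces has vanishing higher cohomology after the appropriate twist). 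So without loss of generality $\Gamma$ is nondegenerate in $\PP^{m_1}\times\PP^{m_2}$.

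Next I would bound $m_1$ and $m_2$. Because $h^1(\Ii_\Gamma(d_1,d_2))>0$ and $\deg(\Gamma)\le d_1+d_2+1$, a linear-algebra/Castelnuovo-type count forces $\Gamma$ to be quite special: projecting to each factor, one shows that either $m_1=0$ (all of $\Gamma$ sits in a single $\beta$-slice $[p_1]\times\PP^{n_2}$, which is the first alternative), or else $m_1=1$ and similarly $m_2\le 1$. Indeed if $m_1\ge 1$ and $m_2\ge 1$ then a minimal subscheme $W\subseteq\Gamma$ with $h^1(\Ii_W(d_1,d_2))>0$ still spans $\PP^{m_1}\times\PP^{m_2}$ by minimality of the ambient, and in $\PP^{m_1}\times\PP^{m_2}$ with $m_1,m_2\ge 1$ one needs $\deg W$ large; matching this against $\deg(\Gamma)\le d_1+d_2+1$ and the explicit list in Lemma \ref{1.7.1} (applied on a $\PP^1\times\PP^1$ cut out inside $\PP^{m_1}\times\PP^{m_2}$) pins $m_1=m_2=1$ or forces the degenerate cases. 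At this point, once $\Gamma\subset\PP^1\times\PP^1$, Lemma \ref{1.7.1} with $(a_1,a_2)=(d_1,d_2)$ gives the four possibilities: $\Gamma$ lies in an $\Oo(1,0)$-curve of the $\PP^1\times\PP^1$ (hence in a $\beta$-slice of the big space, with $\deg\Gamma=d_2+2$ — note the $a_1=0$ edge case also lands here since $d_1\ge 1$ excludes it on $\PP^1\times\PP^1$ only if $m_1=1$, which must be checked), in an $\Oo(0,1)$-curve (hence in an $\alpha$-slice, i.e. $\Gamma\subset[p_1]\times\PP^{n_2}$ after swapping roles), or in an $\Oo(1,1)$-curve of degree $d_1+d_2+2$ — but the last is excluded because $\deg(\Gamma)\le d_1+d_2+1$.

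Finally I would extract the two asserted conclusions. The first alternative is ``$\Gamma\subset[p_1]\times\PP^{n_2}$''; the analysis shows the only other option is that $\Gamma$ lies in an $\Oo(1,0)$-ruling, i.e. inside $\PP^{n_1}\times[p_2]$ for some $[p_2]\in\PP^{n_2}$ with $\deg\Gamma=d_2+2\le d_1+d_2+1$, and then automatically $h^1(\Ii_{\PP^{n_1}\times[p_2]}(d_1,d_2))>0$ (this is the ``second case'' of the statement; here one must be slightly careful that $h^1$ of the ideal of the slice, not merely of $\Gamma$, is positive — but $\deg\Gamma=d_2+2$ forces $\Gamma$ to impose dependent conditions already as a subscheme of the slice $\cong\PP^{n_2}$ with $\Oo(d_2)$, giving the slice-level $h^1>0$ after the residual/Remark \ref{b5} bookkeeping). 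For the last sentence: if we are in that second case and moreover $d_2\ge d_1$, then among the configurations produced by Lemma \ref{1.7.1} the $\Oo(0,1)$-option would give $\deg\Gamma=d_1+2\le d_2+2$ and would put $\Gamma$ in a $\beta$-slice not an $\alpha$-slice (a contradiction with being in the ``second case''), so the only surviving possibility is $\deg(\Gamma)=d_2+2$ with $\Gamma$ contained in a $\beta$-line $T$ (the $\Oo(1,0)\times(\text{point})$ inside the chosen $\PP^1\times\PP^1$, which is literally an $L\times[p_2]$, i.e. a $\beta$-line) — wait, by the paper's convention $L\times[p_2]$ is an $\alpha$-line and $[p_1]\times L$ is a $\beta$-line, so I would restate this consistently: the conclusion is $\deg\Gamma=d_2+2$ and $\Gamma$ is contained in a line of the factor $\PP^{n_2}$ times the point, matching ``$\beta$-line'' as named in the theorem's hypotheses.

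The main obstacle I anticipate is the concision/descent step: verifying carefully that passing to the minimal multiprojective span $\PP^{m_1}\times\PP^{m_2}$ preserves \emph{both} the positivity $h^1(\Ii_\Gamma(d_1,d_2))>0$ and the minimality $h^1(\Ii_{\Gamma'}(d_1,d_2))=0$ for all $\Gamma'\subsetneq\Gamma$ — this requires knowing that $H^0(\PP^{n_1}\times\PP^{n_2},\Oo(d_1,d_2))\to H^0(\PP^{m_1}\times\PP^{m_2},\Oo(d_1,d_2))$ is surjective and that the kernel contributes no $H^1$, i.e. that the ideal sheaf of a coordinate sub-biprojective-space is $(d_1,d_2)$-regular in the appropriate sense. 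Granting that (it follows from Künneth plus the analogous fact for each $\PP^n$), the rest is a bookkeeping reduction to Lemma \ref{1.7.1} together with the degree inequality $\deg(\Gamma)\le d_1+d_2+1$ ruling out the $\Oo(1,1)$ case.
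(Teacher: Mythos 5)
Your concision step is sound: replacing $\PP^{n_1}\times \PP^{n_2}$ by the minimal product of linear subspaces $\PP^{m_1}\times \PP^{m_2}$ containing $\Gamma$ does preserve both $h^1(\Ii _\Gamma (d_1,d_2))>0$ and the minimality hypothesis, because the restriction map on $H^0(\Oo (d_1,d_2))$ is surjective by K\"unneth and $h^1$ is just the failure of the evaluation map to be surjective. But the next step is where the entire content of the lemma lives, and it is missing. You assert that $h^1(\Ii _\Gamma (d_1,d_2))>0$, $\deg (\Gamma )\le d_1+d_2+1$ and minimality force $m_1=0$, or $m_2=0$, or $m_1=m_2=1$, justified by ``a linear-algebra/Castelnuovo-type count'' and by ``the explicit list in Lemma \ref{1.7.1} applied on a $\PP^1\times\PP^1$ cut out inside $\PP^{m_1}\times\PP^{m_2}$.'' This is circular: Lemma \ref{1.7.1} only applies to schemes lying in a $\PP^1\times\PP^1$, and if $\Gamma$ lay in such a surface you would already have $m_1,m_2\le 1$ by definition of the minimal span. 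You give no argument excluding, say, a minimal $\Gamma$ of degree $\le d_1+d_2+1$ with $h^1>0$ spanning $\PP^2\times\PP^1$ or $\PP^2\times\PP^2$; nothing about projecting to the factors does this for you, since sections of $\Oo (d_1,d_2)$ do not factor through either projection. Ruling these configurations out is exactly what the paper does, by a double induction on $n_1+n_2$ and on $d_1+d_2$: choose $D_1\in |\Oo (0,1)|$ (and later $M_1\in |\Oo (1,0)|$) maximizing $\deg (\Gamma \cap D_1)$, run the residual exact sequence, and either descend to a slice (induction on $n_1+n_2$) or drop the bidegree (induction on $d_1+d_2$), invoking \cite[Lemma 34]{bgi} to produce the $\beta$-line and treating the case $n_2=1$, $n_1\ge 2$ separately. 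None of that machinery is replaced by anything in your plan, so the core of the proof is absent rather than merely sketched.

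For what it is worth, the two ends of your plan are fine: once $\Gamma$ is known to lie in a slice or in a $\PP^1\times\PP^1$, Lemma \ref{1.7.1} together with $\deg (\Gamma )\le d_1+d_2+1$ (which kills the $(1,1)$-curve alternative) and $d_1,d_2>0$ finishes the first assertion, and the last sentence is indeed just \cite[Lemma 34]{bgi} applied inside the slice using $\deg (\Gamma )\le 2d_2+1$ when $d_1\le d_2$ (your hesitation over the $\alpha$/$\beta$ naming is cosmetic and traces back to an inconsistency in the statement itself). But the reduction to that situation is the lemma, and it still needs to be proved.
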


\begin{proof}
The last sentence follows from the first part of the lemma by \cite[Lemma 34]{bgi}, because $\deg (\Gamma)\le 2d_2+1$ if $d_1\le d_2$. Hence it is sufficient to prove the first part. By assumption $h^1(\Ii _{\Gamma '}(d_1,d_2)) =0$ for all $\Gamma'\subsetneq \Gamma$. With this assumption we need to prove that $\Gamma$ is contained in one of the slices of $\PP^{n_1}\times \PP^{n_2}$. By Lemma \ref{b3} and Lemma \ref{1.7.1} we may assume $n_1+n_2>2$ and use induction on the integer $n_1+n_2$. We also use induction on the integer $d_1+d_2$, the case $(d_1,d_2)=(1,1)$ being obviously true, because $\deg (\Gamma )\le 3$
(but note that as stated the result would be wrong if $d_2= 0$ and $n_1\ge 2$). With no loss of generality for the firs part we may assume $d_2\ge d_1$ and in particular $d_2\ge 2$. 

Take $D_1\in |\Oo _{\Y}(0,1)|$ such that $f_1:= \deg (\Gamma\cap D_1)$ is maximal. Since obviously $h^0(\Oo _{\Y}(0,1)) = n_2+1$, we have $f_1\ge n_2>0$. If
$h^1\left(D_1,\mathcal {I}_{\Gamma\cap D_1}(d_1,d_2)\right) >0$, then we may use the inductive assumption on the integer $n_1+n_2$. Hence we  assume that
$h^1\left(D_1,\mathcal {I}_{\Gamma\cap D_1}(d_1,d_2)\right)=0$. Therefore by the 
Castelnuovo's sequence 
$$0 \to \Ii _{\Res_{D_1}(\Gamma)}(d_1,d_2-1) \to \Ii _{\Gamma}(d_1,d_2) \to \Ii _{\Gamma\cap {D_1}, {D_1}}(d_1,d_2) \to 0
$$
we have $h^1\left(\Ii _{\mathrm{Res}_{D_1}(\Gamma)}(d_1,d_2-1)\right) =0$.
\\Now,
let $\pi_i: \mathbb{P}^{n_1}\times \mathbb{P}^{n_2}\to \mathbb{P}^{n_i}$ be the projection on the $i$-th factor for $i=1,2$. 
Since $f_1>0$ and $d_2-1 >0$, the inductive assumption gives that either there is a point $[p_1]\in \PP^{n_1}$ such that $h^1\left(\Ii _{\mathrm{Res}_{D_1}(\Gamma)\cap \pi _1^{-1}([p_1])}(d_1,d_2-1)\right) >0$ or there is a point $[p_2]\in \PP^{n_2}$ such that $h^1\left(\Ii _{\mathrm{Res}_{D_1}(\Gamma)\cap \pi _2^{-1}([p_2])}(d_1,d_2-1)\right) >0$. If $[p_2]$ exists, then we are done, since $\Gamma\supseteq \mathrm{Res}_{D_1}(\Gamma)$.

Now assume that such a $[p_2]$ does not exist while suppose the existence of $[p_1]\in \mathbb{P}^{n_1}$ such that $h^1\left(\Ii _{\mathrm{Res}_{D_1}(\Gamma)\cap \pi _1^{-1}([p_1])}(d_1,d_2-1)\right) >0$. Since $f_1>0$ and $d_2\ge d_1$ we have $\deg (\mathrm{Res}_{D_1}(\Gamma)) \le 2d_2$.
By \cite[Lemma 34]{bgi}  there is a $\beta$-line $T\subset \pi _1^{-1}([p_1])$ such that $\deg (T\cap \mathrm{Res}_{D_1}(\Gamma)) \ge d_2+1$.
\\ If $n_2\ge 2$, there is $D\in |\Oo _{\Y}(0,1)|$ containing
$T$ and hence $f_1\ge d_2+1$. We get that  $\deg (\Gamma)\ge 2d_2+2$ which contradicts the hypothesis. 
\\ Now assume $n_2=1$ and hence $n_1\ge 2$. Fix $M_1\in |\Oo _{\Y}(1,0)|$ such that
$g:= \deg (M_1\cap \Gamma)$ is maximal. The existence of the $\beta$-line $T$ such that $\deg (T\cap \mathrm{Res}_{D_1}(\Gamma)) \ge d_2+1$ gives that $g\ge d_2+n_1-1$. 
\\If $h^1\left(M_1,\mathcal {I}_{\Gamma\cap M_1}(d_1,d_2)\right) >0$, then, again, we can use the inductive assumption on the integer $n_1+n_2$. 
\\ Hence we may assume that
$h^1\left(M_1,\mathcal {I}_{\Gamma\cap M_1}(d_1,d_2)\right)=0$. The Castelnuovo's sequence gives $h^1\left(\mathcal {I}_{\mathrm{Res}_{M_1}(\Gamma)}(d_1-1,d_2)\right) >0$. 
\\
If $d_1=1$, then
we get $\deg (\mathrm{Res}_{M_1}(\Gamma))\ge 2$ and hence $\deg (\Gamma)\ge g+2 \ge d_2+n_1+1>d_1+d_2+1$, which contradicts the hypothesis on the degree of $\Gamma$. 
\\ If $d_1\ge 2$, the inductive assumption
gives $\deg (\Res _{M_1}(\Gamma)) \ge d_1+2$ and than we have $\deg (\Gamma) \ge d_2+n_1-1 +d_1+2\ge d_1+d_2+2$, which is again a contradiction.
\end{proof}

The case $n=2$ of the following observation is \cite[Lemma 4.4]{bb3}; the case $n>2$ follows by induction on $n$ taking a hyperplane $H\subset \PP^n$ such that $\deg (Z\cap H)$ is maximal.

\begin{remark}\label{lll1}
Let $Z\subset \PP^n$ be a finite set such that $h^1(\Ii _Z(t)) >0$ and $\deg (Z)\le 2t+2$. Then either there is a line $L\subset \PP^n$ with $\deg (L\cap Z)\ge d+2$
or $\deg (Z) =2t+2$ and there is a reduced conic $C\subset \PP^n$ such that $Z\subset C$.
\end{remark}

\begin{remark}\label{usa3.01}
Take $\Gamma,d_1,d_2,n_1,n_2$  as in Lemma \ref{ba1} and assume the existence of a $\beta$-line $\mathcal{B}$ 
such that $h^1(\Ii _{\Gamma\cap \mathcal{B}
}(d_1,d_2)) >0$,
i.e. such that $\deg (\Gamma\cap \mathcal{B}
) \ge d_1+2$. Fix any $\alpha$-line $\mathcal{A}$
. Since $\deg (\mathcal{B}
\cap \mathcal{A}
) \le 1$,
we have $\deg (\Gamma\cap \mathcal{A}
) \le d_1+d_2+1 -(d_1+2)+1 = d_2$. 
\end{remark}

We are now ready to prove the decomposition Theorem \ref{ee1.1}.

\begin{proof}[Proof of Theorem \ref{ee1.1}:]
Since the proof of this theorem is quite structured, we decided to divide  it in various claims in order to facilitate the reading and to equip each one of them with a figure.

\medskip

First of all remark that we have $1+2\min \{d_1,d_2\}\le 1+d_1+d_2$ and hence with any of the assumptions of Theorem \ref{ee1.1} we could get  $2r({p}) \le 1+d_1+d_2$.

\medskip

Let's start by fixing two different sets of points $S, A\in \Ss ({p})$ computing the rank of $p$.
Then let 
$$S'':= S\cap A.$$ 
as in Figure 3.

\begin{figure}[!h]\label{fig3}
\centering
\includegraphics[width=0.4\textwidth]{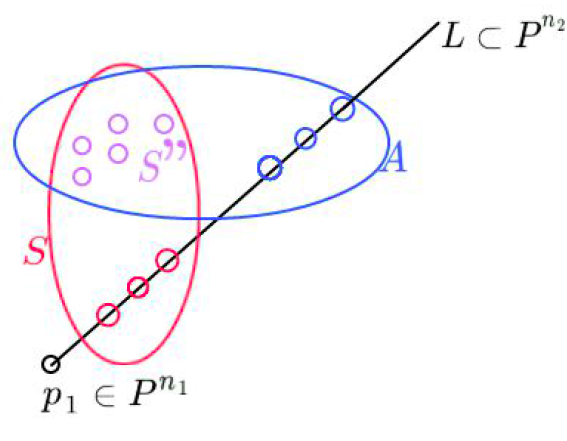} 
  \caption{\small{In the figure we have dropped the ``~square brackets~" for $[p]$ to simplify the visualization.}}
\end{figure}

Since $S$ and $ A$  are different, then $S''$ is a proper subset of both $S$ and $A$, i.e.  $S''\subsetneq S,A$. 

\medskip

\quad {\emph {Claim 1: }} Take any subset of points $G\subseteq S''$. There is a unique point $Q\in \langle \nu_{d_1,d_2} (A\setminus G)\rangle \cap \langle \nu_{d_1,d_2} (S\setminus G)\rangle$ such that $[p]\in \langle
\nu_{d_1,d_2} (G) \cup\{Q\}\rangle$ and $r(Q) = \sharp (S) -\sharp (G)$ (this is illustrated in Figure 4).

\begin{figure}[!h]\label{fig4}
\centering
\includegraphics[width=0.4\textwidth]{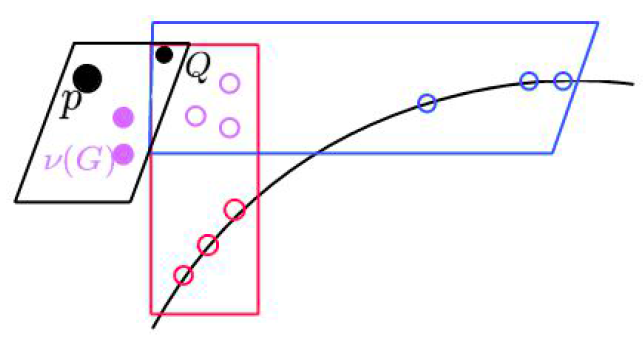} 
  \caption{\small{In the figure we have dropped the ``~square brackets~" for $[p]$ to simplify the visualization and $\nu$ stays for $\nu_{d_1, d_2}$.}}
\end{figure}

\quad {\emph {Proof of Claim 1:}} If $G=\emptyset$, then this claim is trivial (it is sufficient to take $Q=[p]$). So we may assume $G\ne \emptyset$. 

Since $\nu_{d_1,d_2} (S)$ is linearly independent, we have $\langle \nu_{d_1,d_2} (S)\rangle \cap \langle \nu_{d_1,d_2} (A)\rangle = \langle \nu_{d_1,d_2} (S\setminus G)\rangle \cap \langle \nu_{d_1,d_2} (A\setminus G)\rangle+ \langle \nu_{d_1,d_2} (G)
\rangle$ and this
is a direct decomposition.
Since $[p]\notin \langle \nu_{d_1,d_2} (G)\rangle$ for any $G\subsetneq S$, we have $[p]\notin \langle \nu_{d_1,d_2} (S''
)\rangle$ and so there is a unique $Q\in \langle \nu_{d_1,d_2} (A\setminus G)\rangle \cap \langle \nu_{d_1,d_2} (S\setminus G)\rangle$ such that $[p]\in \langle
\nu_{d_1,d_2} (G) \cup\{Q\}\rangle$. Since $Q\in \langle \nu_{d_1,d_2} (S\setminus G)\rangle$, we have $r(Q) \le \sharp (S) -\sharp (
G)$.
Since $[p]$ is in the linear span of $Q$ and $\nu_{d_1,d_2} (G
)$, we have $r({p}) \le r(Q)+\sharp (G)$. Hence $r(Q) = \sharp (S) -\sharp (G)$.\qed

\medskip

Now, set 
\begin{equation}\label{B}B:= A\cup S.\end{equation}
Since $2r({p}) \le 1+d_1+d_2$, we have $\sharp (B)\le 1+d_1+d_2$.
Since $A\nsubseteq S$ and $S\nsubseteq A$, we have
$h^1(\Ii _B(d_1,d_2)) >0$ (\cite[Lemma 1]{bb}). By Lemma \ref{ba1} there is either a $\beta$-slice $\mathbb{B}$
such that $h^1(\Ii _{B\cap\mathbb{B}
}(d_1,d_2))>0$
or an $\alpha$-slice $\mathbb{A}$ 
such that  $h^1(\Ii _{B\cap \mathbb{A}
}(d_1,d_2))>0$. We assume the existence of the $\beta$-slice $\mathbb{B}=[p_1]\times \PP^{n_2}$, because the case of the $\alpha$-slice is analogous. We first assume
that $\deg (\Gamma \cap \mathbb{B})\ge 2d_2+2$. We get that we are in case (\ref{newa}) with $d_1\ge d_2$ and that $B\subset \mathbb {B}$. By Remark \ref{lll1} we have $n_2\ge 2$ and there is a reduced conic
$C\subset \PP^{n_2}$ such that $[p]\in \nu _{d_1,d_2}([p_1]\times C)$ and $A\cup S\subset [p_1]\times C$, i.e. we are in case (\ref{third}). Now assume $\deg (\Gamma \cap \mathbb{B})\le 2d_2+1$. By \cite[Lemma 34]{bgi} there is a line $L\subset \PP^{n_2}$ such that $\deg (\Gamma \cap [p_1]\cap L )\ge d_2+2$. Set $\mathcal {B}:= [p_1]\times L$.

Set:
\begin{itemize}
\item $A':= A\cap \mathbb {B}$, 
\item $S':= S\cap \mathbb {B}$  and 
\item $B':= B\cap \mathbb {B} =A'\cup S'$.
\end{itemize}

Since $h^1(\mathbb {B},\Ii _{B'}(d_1,d_2)) >0$, we have $\sharp (B')\ge d_2+2$ and equality holds only if $B'$ is contained in a line.

\medskip

\quad {\emph {Claim 2: }} We have that $A\setminus A'=S\setminus S'$ (illustrated in Figure 5).

\begin{figure}[!h]\label{fig5}
\centering
\includegraphics[width=0.4\textwidth]{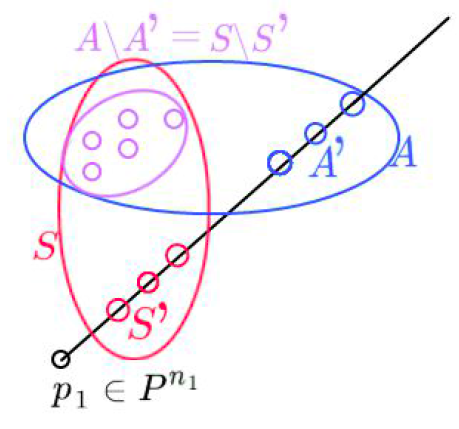} 
  \caption{\small{In the figure we have dropped the ``~square brackets~" for $[p]$ to simplify the visualization.}}
\end{figure}

\quad {\emph {Proof of Claim 2:}} Let $D\subset \PP^{n_1}$ be a general hyperplane containing $[p_1]$. For a general $D$ we have $D\cap B = B'$.

Consider the residual exact sequence of $D$:
\begin{equation}\label{eqee1}
0 \to \Ii _{B\setminus B'}(d_1-1,d_2) \to \Ii _B(d_1,d_2) \to \Ii _{B',D}(d_1,d_2)\to 0.
\end{equation}

If $h^1\left(\Ii _{B\setminus B'}(d_1-1,d_2)\right) =0$, then, from \cite[Lemma 5.1]{bb2}, we immediately get that $A\setminus A' = S\setminus S'$. 

Now assume $h^1\left(\Ii _{B\setminus B'}(d_1-1,d_2)\right)>0$. Since $\sharp (B) \le d_1+d_2+1$ and $\sharp (B') \ge d_2+2$, we have $\sharp (B\setminus B') \le d_1-1$.
Hence $h^1\left(\Ii _{B\setminus B'}(d_1-1,d_2)\right) =0$ if $d_1=1$. Therefore we may assume $d_1-1 >0$. By Lemma \ref{ba1} (with $\Gamma$ being $B$) we have $d_2\le d_1-3$ and $\sharp (B\setminus B') \ge d_2+2$ (one can also apply Remark \ref{usa3.01} with $\Gamma=B$ and $\mathcal{B}=N$), contradicting the assumption 
$\sharp(B)\geq 2d_2$
when $d_2\le d_1-3$.\qed

\medskip

Now if we apply Claim 1 to the set $G:= A\setminus A'$, we get a unique $Q\in \langle \nu_{d_1,d_2} ([p_1]\times \PP^{n_2})\rangle$ with $A'\in \Ss (Q)$ and
$[p]\in \langle \nu_{d_1,d_2} (G)\cup \{Q\}\rangle$. 
\\
Let $L\subseteq \PP^{n_2}$
be a line such  $h^1\left(\Ii _{B\cap ([p_1]\times L)}(d_1,d_2)\right)>0$.
With $\mathcal{B}=[p_1]\times L$, set:

\begin{itemize}
\item $A_1:= A\cap  \mathcal{B}$, 
\item $S_1:= S\cap \mathcal{B}$ and 
\item $B_1:= A_1+S_1$.
\end{itemize}

\medskip

\quad {\emph {Claim 3: }} We have $A\setminus A_1 = S\setminus S_1$ (illustrated in Figure 6).

\medskip

\begin{figure}[!h]\label{fig6}
\centering
\includegraphics[width=0.4\textwidth]{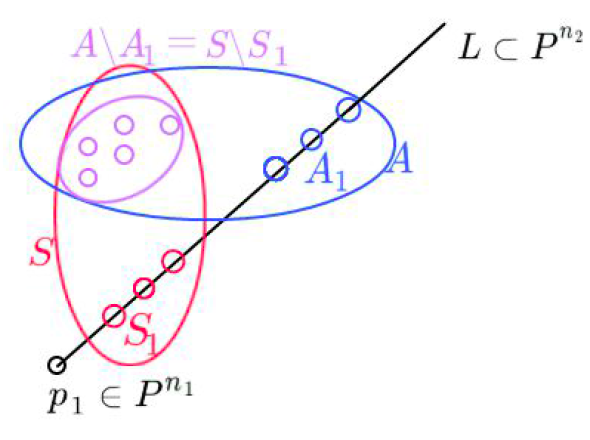} 
  \caption{\small{In the figure we have dropped the ``~square brackets~" for $[p]$ to simplify the visualization.}}
\end{figure}

\quad {\emph {Proof of Claim 3:}} If $n_2 =1$, then $L=\PP^{n_2}$, $A_1=A'$ and $S_1=S'$ and we may apply Claim 2. 

Now assume $n_2\ge 2$ and fix $H\in |\Oo _{\Y}(0,1)|$ with $L\subset H$ and $H$ general. For a general
$H$ we have $S\cap H=S_1$ and $A\cap H=A_1$. Consider the residual exact sequence of $H$:
\begin{equation}\label{eqee2}
0 \to \Ii _{B\setminus B_1}(d_1,d_2-1) \to \Ii _B(d_1,d_2)\to \Ii _{B_1,H}(d_1,d_2)\to 0.
\end{equation}
Since $\sharp (B_1) \ge d_2+2$, we have $\sharp (B\setminus B_1)  \le d_1-1$. 

If $h^1(\Ii _{B\setminus B_1}(d_1,d_2-1)) =0$, then 
\cite[Lemma 5.1]{bb2} gives $A\setminus A_1=S\setminus A_1$.

Now assume $h^1\left(\Ii _{B\setminus B_1}(d_1,d_2-1)\right)>0$. First assume $d_2\ge 2$. Lemma \ref{1.7.1} for the integers $(a_1,a_2) =(d_1,d_2-1)$ gives $\sharp (B\setminus B_1) \ge 2 +\min \{d_1,d_2-1\}$. Since
$\sharp (B\setminus B_1) \le d_1-1$, we first get $d_2 \le d_1$ and then (since $|d_1-d_2|\le 2$, $\sharp (B)\le 1+d_1+d_2$ and $\sharp (B_1)\ge d_2+2$) we get $\sharp (B) = 1+d_1+d_2$, $\sharp (B_1)=d_2+2$ and $d_1=d_2+2$. Since we are necessary in case (\ref{newa}) $\sharp (B)\le 2r({p}) \le 1+d_1+d_2$, we get $2r({p}) =1+d_1+d_2= 3+2d_2$, a contradiction (because $3+2d_2$ is odd). Now assume
$d_2=1$ and hence $d_1\le 3$. Since $2r({p}) \le 1+d_1+d_2\le 5$, we have $r({p}) \le 2$. Since $B\supseteq B_1$ and $\sharp (B_1)\ge d_2+2$, we get a contradiction. \qed

\medskip

Now set 
$$E:= S\setminus S_1.$$ 
as in Figure 7.

\begin{figure}[!h]\label{fig7}
\centering
\includegraphics[width=0.4\textwidth]{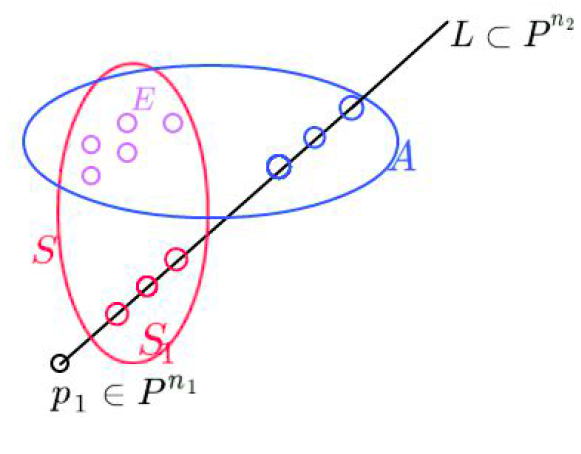} 
  \caption{\small{In the figure we have dropped the ``~square brackets~" for $[p]$ to simplify the visualization.}}
\end{figure}

By Claim 1 applied to the set $G:= E$, we have $A_1,S_1\in \Ss (Q)$ and $A_1\ne S_1$. By the famous theorem of Sylvester \cite{cs, bgi} either $d_2$ is
even and $\sharp (A_1)=\sharp (S_1) = (d_2+2)/2$ or the border rank $b$ of $Q$ is smaller that the rank of $Q$, $2 \le b \le \lfloor d_2/2\rfloor$
and $r(Q) =d_2+2-b$. In the latter case we are in case (\ref{primo}) of Theorem \ref{ee1.1} with $b$ the border rank. If $\sharp (S_1) = (d_2+2)/2$ we are in case (\ref{primo}) with
$b= (d_2+2)/2$. Both cases are contained in case (\ref{primo}) of Theorem \ref{ee1.1}.\end{proof}

\medskip

With the proof of Theorem \ref{ee1.1} done, we can now show the uniqueness part and prove Proposition \ref{ee7}.

\medskip

\begin{proof}[Proof of Proposition \ref{ee7}:]
Fix $S, A, S', A'\in \Ss ({p})$ with $S\ne A$, $S'\ne A'$ and $\{S,A\} \ne \{S',A'\}$. With no loss of generality we may assume that $(S,A)$ is associated to the $\beta$-line $
T= [p_1]\times L$, i.e. we are in case (\ref{primo}) of Theorem \ref{ee1.1}.

\begin{enumerate}
\item\label{(i)} First assume $\{S,A\} \cap  \{S',A'\} \ne \emptyset$, say $A=A'$ ($S=S'$ will be analogous).

In the contest of Theorem \ref{ee1.1},  assume that $(S',A)$ is associated to a $\beta$-line $T'$,
an integer $b'$ and a set $E' = S'\setminus S'\cap T' = A\setminus A\cap T'$. 

Clearly, since $A=A'$ and since they are both contained in a line ($T$ and $T'$ respectively) the two lines have to be the same: $T=T'$. This implies that $E =A\setminus A\cap T =E'$, therefore in this case there is nothing to prove.

\item Now assume $\{S,A\} \cap  \{S',A'\} =\emptyset$. We can apply step (\ref{(i)}) to the two pairs $(S,A)$ and $(S,A')$ (resp. the two pairs $(S,A)$ and $(S',A)$)
and get that  $A'\setminus A'\cap T = E$ (resp. $S'\setminus S'\cap T =E$). Therefore $S'\cap A'\supseteq E =S\cap A$. By symmetry, we also have that $S'\cap A' =S\cap A$.
If $(S',A')$ is associated to a curve $T'$ and the integer $b'$, then  $b=b'$.
\end{enumerate}
\end{proof}

\subsection{A trivial bound for $\rho'(X)$ in the case of Segre-Veronese varieties}
In the Introduction, in Definition \ref{rho}, we introduced $\rho ' (X)$ to be the maximal integer $t$ such that any subset of $X$ with cardinality $t$ is linearly independent. Then we stated as a general fact that if $X$ is the Segre-Veronese variety of $k$ factors, i.e. $X=\SV$  is the embedding of $\mathbb{P}(V_1)\times \cdots \times \mathbb{P}(V_k)$ into $\mathbb{P}(S^{d_1}V_1\otimes \cdots \otimes  S^{d_k}V_k)$ induced by the complete linear system $|\mathcal{O}_{\mathbb{P}(V_1)\times  \cdots \times \mathbb{P}(V_k)}(d_1,  \ldots , d_k)|$,
then
$\rho '(X) = 1 +\min _{1\le i \le k} \{d_i\}$ (Lemma \ref{suck}). Unfortunately we cannot find a precise reference for this fact, but since it is quite easy to be shown, we include the proof for sake of completeness.

\begin{lemma}\label{suck}
Let $X$ be the Segre-Veronese embedding of $ \mathbb {P}^{n_1}\times \cdots \times \mathbb {P}^{n_k}$ into  $\mathbb{P}(S^{d_1}V_1\otimes \cdots \otimes  S^{d_k}V_k)$ by
the linear system $|\Oo _{ \mathbb {P}^{n_1}\times \cdots \times \mathbb {P}^{n_k}}(d_1,\dots ,d_k)|$. Then $\rho '(X) =1+\min _{1\le i \le k}\{d_i\}$.
\end{lemma}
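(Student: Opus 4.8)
The plan is to prove the two inequalities $\rho'(X) \ge 1+\min_i\{d_i\}$ and $\rho'(X) \le 1+\min_i\{d_i\}$ separately. Throughout, write $d := \min_{1\le i\le k}\{d_i\}$ and, after reindexing the factors, assume $d = d_1$. Recall that $\rho'(X)$ is by definition the largest $t$ such that every $t$-element subset of $X$ is linearly independent, so to prove $\rho'(X)\le 1+d$ it suffices to exhibit $\emph{one}$ linearly dependent set of $d+2$ points of $X$, and to prove $\rho'(X)\ge 1+d$ it suffices to show that $\emph{every}$ set of $d+1$ points is linearly independent.

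For the upper bound $\rho'(X)\le 1+d$: here I would use the rational normal curve sitting inside $X$. Choose a line $R\subset\PP^{n_1}$ and points $[p_i]\in\PP^{n_i}$ for $i=2,\dots,k$, and consider the curve $C := \nu_{d_1,\dots,d_k}(R\times\{[p_2]\}\times\cdots\times\{[p_k]\})$. Since the restriction of $\Oo(d_1,\dots,d_k)$ to $R\times\{\text{pt}\}\times\cdots\times\{\text{pt}\}\cong\PP^1$ is $\Oo_{\PP^1}(d_1)$, the curve $C$ is a rational normal curve of degree $d_1 = d$ spanning a $\PP^d$. Any $d+2$ distinct points on a rational normal curve of degree $d$ span that same $\PP^d$ and are therefore linearly dependent; hence $X$ contains $d+2$ linearly dependent points and $\rho'(X)\le 1+d$.

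For the lower bound $\rho'(X)\ge 1+d$: I must show that any $d+1$ distinct points $x_0,\dots,x_d\in X$ are linearly independent, equivalently that a zero-dimensional reduced scheme $Z\subset\PP^{n_1}\times\cdots\times\PP^{n_k}$ with $\deg(Z)=d+1$ satisfies $h^1(\Ii_Z(d_1,\dots,d_k))=0$. The natural approach is induction on $k$ together with a Castelnuovo/residuation argument of exactly the kind used in Lemma \ref{ba1} and Lemma \ref{1.7.1}: pick a divisor $D\in|\Oo(0,\dots,0,1,0,\dots,0)|$ (a hyperplane pulled back from the factor with the $\emph{largest}$ $n_i$, chosen so that $\deg(Z\cap D)$ is maximal, hence $\ge$ the number of points lying in a "slice"), use the residual exact sequence to reduce $h^1$ in multidegree $(d_1,\dots,d_k)$ to an $h^1$ in a smaller multidegree on $D$ and a smaller multidegree off $D$, and iterate. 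A cleaner alternative that avoids cohomology entirely: project $X$ birationally, or rather observe that it is enough to treat the single-factor case — if $d+1$ points of $\PP^{n_1}\times\cdots\times\PP^{n_k}$ are given, one can find a multilinear form of multidegree $(1,0,\dots,0)$, then promote it, which reduces matters to showing that $d+1$ points in the image of the degree-$d_1$ Veronese (together with the extra Segre-Veronese directions) are independent; and $d+1\le d_1+1$ points on a degree-$d_1$ Veronese variety are always independent because they are already independent after restricting to a rational normal curve of degree $d_1$ through them (interpolation: $d_1+1$ conditions in $\PP^{d_1}$), and independence only improves under the further Segre-Veronese embedding. I would present the induction version since it parallels the machinery already set up in the paper.

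The main obstacle is the lower bound, specifically making the inductive residuation bookkeeping clean: one must be careful that, after removing a slice, the residual scheme has degree small enough relative to the reduced multidegree for the inductive hypothesis to apply, and one must handle the base cases ($k=1$, which is just the statement that $\le d_1+1$ points on a rational normal curve of degree $d_1$ are independent, and small values such as $d_1=1$) explicitly. The subtlety flagged in Lemma \ref{ba1} — that the slice-containment conclusion fails when some $d_i=0$ — does not arise here since we assume all $d_i\ge 1$ in the Segre-Veronese setting, but I would state the hypotheses carefully to be safe.
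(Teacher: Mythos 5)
Your upper bound is exactly the paper's: a set of $d_1+2$ points on a line of the first factor times fixed points of the others has $h^1(\Ii_F(d_1,\dots,d_k))=1$ because it maps to $d_1+2$ points on a rational normal curve of degree $d_1$. For the lower bound, however, you take a genuinely different route. The paper does not residuate at all: given $S=\{p_1,\dots,p_x\}$ with $x\le d_1+1$, it separates the points directly, using very ampleness of $\Oo(1,\dots,1)$ to find for each $p_i$ a divisor $H_i$ of multidegree $(1,\dots,1)$ through $p_i$ missing the other points, and then adding a residual divisor $M$ of multidegree $(d_1-1,\dots,d_k-1)$ disjoint from $S$ to land in $|\Oo(d_1,\dots,d_k)|$; this shows each new point drops $h^0$ by one, so $h^1(\Ii_S(d_1,\dots,d_k))=0$. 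That argument is shorter and avoids the bookkeeping you correctly identify as the main obstacle of the Castelnuovo-sequence induction (which does work, but needs the care you describe: after removing a slice the residual set has at most $d_1$ points while the minimal degree drops by at most one, so the inductive hypothesis still applies). One caution about your ``cleaner alternative'': projecting to the first factor fails when two of the points share the same first coordinate, and an irreducible rational normal curve of degree $d_1$ through $d_1+1$ arbitrary points of $\PP^{n_1}$ need not exist (e.g.\ when three of them are collinear and $d_1\ge 2$), so that sketch has a real gap; since you explicitly commit to the induction version instead, this does not undermine your plan, but the paper's separation argument is the cleanest of the three options.
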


\begin{proof}
With no loss of generality we may assume $d_i\ge d_1$ for all $i$. 
\\
Fix a line $L\subseteq \mathbb {P}^{n_1}$ and $O_i\in \mathbb {P}^{n_i}$, $i=2,\dots ,k$. Take $E\subset L$
with $\sharp (E)=d_1+2$ and set $F:= E\times \{O_2\}\times \cdots \times \{O_k\}$. 
\\
Since $h^1({ \mathbb {P}^{n_1}\times \cdots \times \mathbb {P}^{n_k}},\Ii _F(d_1,\dots ,d_k)) = h^1(\mathbb {P}^{n_1},\Ii _E(d_1)) = h^1(L,\Ii _E(d_1)) =1$,
we have $\rho '(X) \le 1+\min _{1\le i \le k}\{d_i\}$. 

To prove the lemma it is sufficient to show that $$h^1({ \mathbb {P}^{n_1}\times \cdots \times \mathbb {P}^{n_k}},\Ii _S(d_1,\dots ,d_k)) =0$$ for every set
$S\subset { \mathbb {P}^{n_1}\times \cdots \times \mathbb {P}^{n_k}}$ with $\sharp (S) \le d_1+1$.
\\ Order the points $p_1,\dots ,p_x$, $x\le d_1+1$, of $S$. Set $S_0:= \emptyset$ and $S_y = \{p_1,\dots ,p_y\}$. Since $\Oo _{ \mathbb {P}^{n_1}\times \cdots \times \mathbb {P}^{n_k}}(1,\dots ,1)$ is very ample, for each
$i=1,\dots ,x-1$ there is $H_i\in |\Oo _{ \mathbb {P}^{n_1}\times \cdots \times \mathbb {P}^{n_k}}(1,\dots ,1)|$ with $p_i\in H_i$ and $p_j\notin H_i$ for all $j\ne i$. If $d_i=1$ for all $i$, then set $M:= \emptyset$.
If $d_i\ne 1$ for some $i$, let $M\in |\Oo _{ \mathbb {P}^{n_1}\times \cdots \times \mathbb {P}^{n_k}}(d_1-1,\dots ,d_k-1)|$ with $M\cap S=\emptyset$.
The divisors $H_1+M,\dots ,H_{x-1}+M$ give $h^0({ \mathbb {P}^{n_1}\times \cdots \times \mathbb {P}^{n_k}},\Ii _{S_y}(d_1,\dots ,d_k)) = h^0({ \mathbb {P}^{n_1}\times \cdots \times \mathbb {P}^{n_k}},\Ii _{S_{y-1}}(d_1,\dots ,d_k)) -1$ for $i=1,\dots ,x$.
Hence $h^1(\Ii _S(d_1,\dots ,d_k))=0$.
\end{proof}


\section{Rank on the tangential variety of Segre-Veronese varieties}\label{RankTg}

First of all in this section we will consider the Segre-Veronese variety\\ $\SV$ of any number of factors. Then we will describe the rank of multi-homogeneous polynomials (partially symmetric tensors) that can be written as a limit of a sequence of rank 2  multi-homogeneous polynomials (partially symmetric tensors). 
If $p\in S^{d_1}V_1^*\otimes \cdots \otimes S^{d_k}V_k^*$ is one of those polynomials, one says  that $[p]$ has \emph{border rank} 2.  To be more precise, let 
$$\sigma_2(\SV)=\overline{\bigcup_{[p_1],[p_2]\in \SV}\langle [p_1],[p_2]\rangle}$$
be the \emph{secant variety} to $\SV$.\\ Clearly $\SV\subset \sigma_2(\SV)$. \\An element in $\sigma_2(\SV)$ that is not in $\SV$ is either a projective class of a  multi-homogeneous polynomial (partially symmetric tensor) of rank 2, or it is the limit of a sequence of rank 2 elements. Clearly, from the point of view of the knowledge of the rank, the only interesting case is the one of points that are limit of rank 2 elements. Those represent a closed subvariety of $\sigma_2(\SV)$ that we indicate with $\tau(\SV)$ and that is the \emph{tangential variety} of $\SV$:
$$\tau(\SV)=\overline{\bigcup_{[p]\in \SV }T_{[p]}(\SV)}.$$
Here we prove the following theorem.
 
\begin{theorem}\label{i1}
The rank of $[p]\in \tau(\SV)$ is 
$$r_{d_1, \ldots , d_k}(p)=\sum_{i=1}^h d_i$$ 
if $V_1^*=K[x_{1,0}, \ldots , x_{1,n_1}]_1$, $\ldots ,$ $V_h^*=K[x_{h,0}, \ldots , x_{h,n_h}]_1$ are the minimum sets of variables to which the multi-homogeneous polynomial $[p]$ actually depends on, $h\leq k$.  
\\
In terms of partially symmetric tensors this means that the tensor depends actually on $h\leq k$ factors: $p\in S^{d_1}V_1\otimes \cdots \otimes S^{d_h}V_h\subset S^{d_1}V_1\otimes \cdots \otimes S^{d_k}V_k$.
\end{theorem}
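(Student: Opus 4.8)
The plan is to establish $r_{d_1,\dots,d_k}(p)\le D$ and $r_{d_1,\dots,d_k}(p)\ge D$ with $D:=\sum_{i=1}^h d_i$, after a normalization. First I would reduce to the essential case $h=k$: the rank of a partially symmetric tensor is unchanged when one tensors it with a fixed nonzero power in an extra factor (from any decomposition, contract on that factor a linear functional that does not vanish on that fixed power). A point of $\tau(\SV)\setminus\SV$ is a tangent vector at a rank‑$1$ point $q_0=l_1^{d_1}\cdots l_k^{d_k}$, so
$$p=\sum_{j=1}^{k}\Big(\prod_{i\ne j}l_i^{d_i}\Big)\,l_j^{d_j-1}m_j ,$$
and that $p$ genuinely involves more than one variable of each factor forces $m_j\notin\langle l_j\rangle$ for all $j$; then factor $j$ uses only the plane $\langle l_j,m_j\rangle$, so we may assume the parametrizing space is $\PP^1\times\cdots\times\PP^1$ and, after a coordinate change in each factor, $p=\sum_{j=1}^k x_j^{d_j-1}y_j\prod_{i\ne j}x_i^{d_i}$. (If $h=0$, then $p\in\SV$ and $r(p)=1$, a case formally outside the statement.)

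For the upper bound, let $C\subset\PP^1\times\cdots\times\PP^1$ be the rational curve of multidegree $(1,\dots,1)$ through $q:=([l_1],\dots,[l_k])$ with tangent direction $([m_1],\dots,[m_k])$ at $q$. Then $\Oo_C(d_1,\dots,d_k)\cong\Oo_{\PP^1}(D)$, and the restriction $H^0(\Oo(d_1,\dots,d_k))\to H^0(\Oo_C(D))$ is the surjective multiplication map of binary forms, so $\nu_{d_1,\dots,d_k}(C)$ is a rational normal curve of degree $D$ spanning a $\PP^D$. Since the projective tangent line to $\nu_{d_1,\dots,d_k}(C)$ at $\nu_{d_1,\dots,d_k}(q)$ is precisely the tangent line to $\nu_{d_1,\dots,d_k}(\PP^1\times\cdots\times\PP^1)$ that carries $[p]$, the point $[p]$ lies on a tangent line to this rational normal curve; any such point is the class of a degree‑$D$ binary form, hence has rank $\le D$, and that decomposition pulls back to one of $[p]$ on the Segre--Veronese variety. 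Thus $r_{d_1,\dots,d_k}(p)\le D$. (Equivalently, decompose each binary form $l_j^{d_j-1}m_j$ as a sum of $d_j$ $d_j$‑th powers of linear forms in $\langle l_j,m_j\rangle$ via Sylvester's theorem, and substitute.)

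For the lower bound — the substantial direction — I would argue by a Hilbert‑function estimate on $\PP^1\times\cdots\times\PP^1$. Let $S$ be a reduced set evincing $r:=r_{d_1,\dots,d_k}(p)$ and let $Z\subset C$ be the length‑$2$ curvilinear scheme at $q$ with tangent direction $([m_i])$, so $[p]\in\langle\nu_{d_1,\dots,d_k}(Z)\rangle\cap\langle\nu_{d_1,\dots,d_k}(S)\rangle$. As $Z$ is non‑reduced while $S$ is reduced with $\#S=r\ge2$, neither of $Z,S$ lies in the other, so $h^1(\Ii_{Z\cup S}(d_1,\dots,d_k))>0$ by \cite[Lemma 1]{bb}. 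Any subscheme $W\subseteq Z\cup S$ with $h^1(\Ii_W(d_1,\dots,d_k))>0$ must contain $Z$: it cannot be contained in $S$, because $\nu_{d_1,\dots,d_k}(S)$ is linearly independent, and if it were $\{q\}\cup S_0$ with $S_0\subseteq S$ then $\nu_{d_1,\dots,d_k}(q)\in\langle\nu_{d_1,\dots,d_k}(S_0)\rangle$, which by apolarity (the apolar ideal of a rank‑one form defines a scheme supported at a single point) forces $q\in S_0\subseteq S$ and then $W\subseteq S$, excluded. Hence the minimal such $W$ contains $Z$ and so is not contained in any fibre of any of the $k$ factor projections onto $\PP^1$. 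The remaining input is the multiprojective estimate: \emph{if $W\subset(\PP^1)^k$ is a zero‑dimensional scheme, minimal with $h^1(\Ii_W(d_1,\dots,d_k))>0$, and not contained in any fibre of a factor projection, then $\deg W\ge d_1+\cdots+d_k+2$.} I would prove this exactly as Lemma~\ref{ba1} is deduced from Lemma~\ref{1.7.1}: a double induction on $k$ and on $d_1+\cdots+d_k$, cutting by a divisor of one of the classes $e_i$ whose trace on $W$ has maximal degree, using the Castelnuovo residual exact sequence, and feeding the residual scheme (one unit of degree lost in the $i$‑th factor) and the trace (living on $(\PP^1)^{k-1}$, or on a smaller slice) into the two inductive hypotheses, with Remark~\ref{lll1}, Lemma~\ref{1.7.1} and Lemma~\ref{ba1} as base cases. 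Granting it, $\deg(Z\cup S)\ge\deg W\ge D+2$; as $\deg(Z\cup S)=r+2$ when $q\notin S$ and $=r+1$ when $q\in S$, and the latter would give $r\ge D+1$ against the upper bound, we conclude $q\notin S$ and $r\ge D$.

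The main obstacle is this multiprojective $h^1$‑estimate on $(\PP^1)^k$ for $k>2$: even the two‑factor case is Lemma~\ref{ba1}, whose proof is already long, and carrying the double induction through $k$ factors requires, at each cut, deciding whether the residual scheme or the trace triggers the inductive hypothesis and excluding the borderline (multi)degree configurations, exactly as in the delicate proof of Lemma~\ref{1.7.1}. Secondary points are verifying \cite[Lemma 1]{bb} in the (standard) form needed for the non‑reduced scheme $Z$, and the apolarity fact used above to show that the minimal bad subscheme contains $Z$.
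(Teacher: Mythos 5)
Your reduction to $h=k$ and your upper bound (the multidegree $(1,\dots,1)$ rational curve through $W_p$, whose image under $\nu_{d_1,\dots,d_k}$ is a rational normal curve of degree $D$, followed by Sylvester) coincide with the paper's argument. The lower bound, however, has a genuine gap, and it is exactly where you flag it: the multiprojective estimate ``if $W\subset(\PP^1)^k$ is minimal with $h^1(\Ii_W(d_1,\dots,d_k))>0$ and not contained in a fibre of a factor projection, then $\deg W\ge d_1+\cdots+d_k+2$'' is only asserted, with a one-sentence indication that a double induction ``as for Lemma~\ref{ba1}'' would work. The paper proves this type of statement only for two factors (Lemmas~\ref{1.7.1} and~\ref{ba1}), and those proofs are already long and case-heavy; nothing in the paper supplies a $k>2$ version, and producing one is a substantial piece of work rather than a routine extension (at each cut one must decide whether the trace or the residual triggers which inductive hypothesis and exclude all borderline multidegree configurations). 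A second, smaller flaw is your argument that the minimal subscheme $W\subseteq Z\cup S$ with $h^1>0$ must contain the whole $2$-jet $Z$: the claim that $\nu_{d_1,\dots,d_k}(q)\in\langle\nu_{d_1,\dots,d_k}(S_0)\rangle$ forces $q\in S_0$ is false at this level of generality. Here $\#S_0$ can be as large as $\sum_i d_i$, which exceeds $\rho'(\SV)=1+\min_i d_i$, and a point of the Segre--Veronese can perfectly well lie in the span of other points of it (any point of a rational normal curve of degree $\min_i d_i$ sitting inside $\SV$ lies in the span of any $\min_i d_i+1$ other points of that curve); ``apolarity of a rank-one form'' does not give what you need.

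The paper sidesteps all of this. For the lower bound it simply regards $p$ as a point of the tangential variety of the larger Segre variety $S_{1,\dots,1}(V_1,\dots,V_1,\dots,V_k,\dots,V_k)$, with $V_i$ repeated $d_i$ times, whose rank was computed to equal $d_1+\cdots+d_k$ in \cite{bb1}; since $\SV$ is contained in that Segre variety, every Segre--Veronese decomposition of $p$ is in particular a Segre decomposition, whence $r_{d_1,\dots,d_k}(p)\ge d_1+\cdots+d_k$ immediately. To close your argument you should either import that result or actually prove the multiprojective $h^1$ lemma and repair the ``$W\supseteq Z$'' step.
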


This result is expected, in fact it is the generalization of the following two particular and well known cases. 

If $k=1$ then  $\SV=S_{d_1}(V_1)$ is nothing else than the  Veronese variety obtained by embedding $\mathbb{P}(V_1)$ with the complete linear system $|\mathcal{O}_{\mathbb{P}(V_1)}(d_1)|$ into $\mathbb{P}(S^{d_1}V_1)$ that parameterizes projective classes of rank 1  homogeneous polynomials of degree $d_1$ in $n_1+1$ variables that are pure powers of linear forms (completely symmetric tensors of order $d_1$). In this case the rank of $[p]\in\tau(S_{d_1}(V_1))\setminus S_{d_1}(V_1)$ is equal to $d_1$ (this is done in \cite{bgi}). 

The other particular case is the one where $d_1=\cdots =d_k=1$. It corresponds to Segre variety where $\SV=S_{1, \ldots , 1}(V_1, \ldots , V_k)$ is the embedding of $\mathbb{P}(V_1)\times \cdots \times \mathbb{P}(V_k)$ with the complete linear system $|\mathcal{O}_{\mathbb{P}(V_1)\times \cdots \times \mathbb{P}(V_k)}(1, \ldots, 1)|$ into $\mathbb{P}(V_1\otimes \cdots \otimes V_k)$. In \cite{bb1} we proved that the rank of an element $[p]\in\tau( S_{1, \ldots , 1}(V_1, \ldots , V_k))\setminus S_{1, \ldots , 1}(V_1, \ldots , V_k) $ is $k$ if $[p]$ is not contained in any smaller Segre variety (i.e. with less factors).

\medskip

Before entering the details of the proof of Theorem  \ref{i1} we need the following lemma (Concision or Autarky for multi-homogeneous polynomials or partially symmetric tensors) (see \cite[Proposition 3.1.3.1]{l} for tensors and \cite[Exercise 3.2.2.2]{l} for homogeneous polynomials or symmetric tensors). This lemma will assure that the rank of any $p\in S^{d_1}V_1\otimes \cdots \otimes S^{d_k}V_k$ won't depend on the dimension of the $V_i$'s for $i=1, \ldots, k$.

\begin{definition} Let $W_i\subseteq V_i$ be any non trivial vector subspace for $i=1, \ldots, k$ and assume that 
$p\in S^{d_1}W_1\otimes \cdots \otimes S^{d_k}W_k \subset S^{d_1}V_1\otimes \cdots \otimes S^{d_k}V_k$.

The \emph{rank of $p$ as an element of} $S^{d_1}W_1\otimes \cdots \otimes S^{d_k}W_k$ is 
the minimum integer $r$ such that $[p]\in \langle [p_1], \ldots , [p_{r}]\rangle$ with $[p_i]\in S_{d_1, \ldots , d_k}(W_1, \ldots , W_k) $ for $i=1, \ldots , r$.

The \emph{rank  of $p$  as an element of} $S^{d_1}V_1\otimes \cdots \otimes S^{d_k}V_k$ is 
the minimum integer $r'$ such that $[p]\in \langle [p_1], \ldots , [p_r']\rangle$ with $[p_i]\in \SV$ for $i=1, \ldots , r'$.
\end{definition}

\begin{lemma}[Concision/Autarky]\label{aa1}
Let $W_i\subseteq V_i$ be any non trivial vector subspace for $i=1, \ldots, k$.
The rank $r$  of an element  $p\in S^{d_1}W_1\otimes \cdots \otimes S^{d_k}W_k \subset S^{d_1}V_1\otimes \cdots \otimes S^{d_k}V_k$ as an element of $S^{d_1}V_1\otimes \cdots \otimes S^{d_k}V_k$ is  the same as the rank of $p$ as an element of $S^{d_1}W_1\otimes \cdots \otimes S^{d_k}W_k$. 

For each linear form $l_{i,j}\in V_i^*$ 
such that the multi-homogeneous polynomial $p$ can be written as $p = \sum _{j=1}^{r} \lambda_j l_{1,j}^{ d_1} \cdots  l_{k,j}^{d_k}$, we have $l_{i,j}\in W_i^*$ for all $i=1, \dots , k$, $\lambda_j\in K$, $  j=1, \ldots , r$.

In terms of partially symmetric tensors, this can be rephrased as follows. For each $p_{i,j}\in V_i$ 
such
that $p = \sum _{j=1}^{r} \lambda_j p_{1,j}^{\otimes d_1}\otimes \cdots \otimes p_{k,j}^{\otimes d_k}$ we have $p_{i,j}\in W_i$ $\lambda_j\in K$ for all $i=1, \dots , k$, $  j=1, \ldots , r$.
\end{lemma}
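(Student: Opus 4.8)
\textbf{Proof plan for Lemma \ref{aa1} (Concision/Autarky).}

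The plan is to reduce everything to the classical statement that a finite set of points on a Veronese-type variety imposing independent conditions cannot be dragged out of the linear span it already lies in. First I would set up the geometric translation: if $p\in S^{d_1}W_1\otimes\cdots\otimes S^{d_k}W_k$, then $[p]$ lies in the linear subspace $\Pi:=\langle \nu_{d_1,\dots,d_k}(\PP(W_1)\times\cdots\times\PP(W_k))\rangle$ of $\PP(S^{d_1}V_1\otimes\cdots\otimes S^{d_k}V_k)$, because $\Pi$ is exactly the span of the sub-Segre-Veronese $X_W:=S_{d_1,\dots,d_k}(W_1,\dots,W_k)$, which is cut out scheme-theoretically inside $X:=\SV$ by the multilinear forms vanishing on $\PP(W_1)\times\cdots\times\PP(W_k)$. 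The inequality ``rank as element of the big space $\le$ rank as element of the small space'' is trivial since $X_W\subseteq X$, so the content is the reverse inequality.

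The main step is: if $[p]=\sum_{j=1}^r \lambda_j [p_j]$ with $[p_j]\in X$, i.e. $p_j = p_{1,j}^{\otimes d_1}\otimes\cdots\otimes p_{k,j}^{\otimes d_k}$ for some $p_{i,j}\in V_i$, and if moreover $r$ is \emph{minimal} among all such decompositions using points of $X$, then automatically $[p_j]\in X_W$, equivalently $p_{i,j}\in W_i$ for every $i,j$ (up to scalars, which is what the linear-form formulation records). To see this, note $S:=\{[p_1],\dots,[p_r]\}$ is a minimal set of points of $X$ whose span contains $[p]$; by minimality $\nu_{d_1,\dots,d_k}(S)$ is linearly independent and $[p]$ is not in the span of any proper subset. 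Now suppose some $[p_j]\notin X_W$. Choose a multilinear form $\ell$ of multidegree $(1,\dots,1)$ vanishing on $\PP(W_1)\times\cdots\times\PP(W_k)$ but not at the point of $\PP(V_1)\times\cdots\times\PP(V_k)$ corresponding to $[p_j]$ — such $\ell$ exists since $\Oo(1,\dots,1)$ is very ample and $\PP(W_1)\times\cdots\times\PP(W_k)$ is a proper closed subvariety once the corresponding point is outside it. If $d_i\ge 1$ for all $i$ (which holds), pick a further multilinear form $M$ of multidegree $(d_1-1,\dots,d_k-1)$ (or $M\equiv 1$ if all $d_i=1$) vanishing on none of the $r$ points; then $\ell\cdot M$ is a section of $\Oo(d_1,\dots,d_k)$ vanishing on $X_W$ (hence, restricted, the pairing $\langle p,\ell\cdot M\rangle$ is zero because $p\in\Pi=\langle X_W\rangle$) but not vanishing at $[p_j]$ and vanishing at none of the others; this contradicts the linear independence of $\nu_{d_1,\dots,d_k}(S)$ together with $[p]\in\langle\nu_{d_1,\dots,d_k}(S)\rangle$, because evaluating $\ell\cdot M$ on the identity $p=\sum\lambda_j p_j$ forces $\lambda_j\langle p_j,\ell M\rangle$ to be cancelled by the other terms, impossible when it is the only nonzero term. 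Hence every $[p_j]\in X_W$, so the minimal rank is realized inside $X_W$ and the two ranks agree.

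The symmetric-tensor formulation then follows verbatim from the Segre-Veronese description via $S^{d_i}V_i^*\simeq K[x_{i,0},\dots,x_{i,n_i}]_{d_i}$, noting that a point $[p_{i,j}]\in\PP(V_i)$ lies in $\PP(W_i)$ precisely when the linear form $l_{i,j}$ dual to it lies in $W_i^*$, so the statement ``$l_{i,j}\in W_i^*$'' and ``$p_{i,j}\in W_i$'' are two readings of the same fact. The step I expect to be the only real obstacle is making the vanishing/evaluation argument airtight — i.e. producing the separating section $\ell\cdot M$ of $\Oo(d_1,\dots,d_k)$ and correctly transporting the condition ``$p$ lies in $\langle X_W\rangle$'' into ``$p$ pairs to zero with every section vanishing on $X_W$'' — but this is exactly the standard argument behind \cite[Exercise 3.2.2.2]{l} and is completely analogous to the genericity-of-divisor constructions used in the proof of Lemma \ref{suck} above, so no new difficulty arises.
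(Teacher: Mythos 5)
There is a genuine gap at the heart of your argument. For the contradiction you need $\lambda_j\langle p_j,\ell M\rangle$ to be ``the only nonzero term'' in the evaluation of $\ell\cdot M$ on $p=\sum_i\lambda_i p_i$, but your construction guarantees the opposite: you choose $M$ vanishing at \emph{none} of the $r$ points, and $\ell$ is only required to vanish on $\PP(W_1)\times\cdots\times\PP(W_k)$, so $\ell M$ has no reason to vanish at the other points $[p_i]$, $i\ne j$ (several of which may equally well lie outside the subvariety). What you obtain is only a linear relation $0=\sum_i\lambda_i(\ell M)(P_i)$ with possibly many nonzero summands, which is no contradiction. Moreover this cannot be repaired by a cleverer choice of section: the totality of the identities $\sum_i\lambda_i\sigma(P_i)=0$, as $\sigma$ ranges over all multidegree-$(d_1,\dots,d_k)$ forms vanishing on $X_W:=S_{d_1,\dots,d_k}(W_1,\dots,W_k)$, is \emph{literally equivalent} to the hypothesis $[p]\in\Pi=\langle X_W\rangle$ (since $\Pi$ is the intersection of the hyperplanes corresponding to such $\sigma$), so this apolarity bookkeeping alone can never force the individual summands into $X_W$. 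To truly separate $P_j$ you would need a form of the \emph{fixed} multidegree $(d_1,\dots,d_k)$ vanishing on all of $X_W\cup(S\setminus\{P_j\})$ but not at $P_j$; since the lemma imposes no bound on $r=\sharp(S)$, such a form is not guaranteed to exist.

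The missing idea is the substitution (or retraction) trick, which is how the paper argues: reduce to the case $W_i=V_i$ for $i\ne 1$ and $W_1$ a hyperplane, choose coordinates with $W_1=\{x_{1,0}=0\}$, and write $l_{1,j}=a_jx_{1,0}+l_j(x_{1,1},\dots,x_{1,n_1})$. If some $a_j\ne 0$, say $a_1\ne 0$, then since $p$ does not involve $x_{1,0}$ one may substitute $x_{1,0}:=-l_1(x_{1,1},\dots,x_{1,n_1})/a_1$ throughout: this fixes $p$, annihilates the first rank-one summand, and leaves every other summand of rank at most one, yielding a decomposition of length at most $r-1$ and contradicting minimality. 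Equivalently, in tensor language, pick a linear map $\phi:V_1\to V_1$ with $\phi|_{W_1}=\mathrm{id}$ and $\phi(p_{1,1})=0$ and apply $S^{d_1}\phi\otimes\mathrm{id}\otimes\cdots\otimes\mathrm{id}$. Your framing (the span $\Pi$, the trivial inequality, minimality and linear independence of $\nu_{d_1,\dots,d_k}(S)$, and the polynomial/tensor dictionary at the end) is correct, but the central separating-section step must be replaced by an argument of this kind.
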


\begin{proof}
Obviously the rank of $p$ as an element of $S^{d_1}W_1^*\otimes \cdots \otimes S^{d_k}W_k^*$ is at least its rank, $r$, as an element of
$S^{d_1}V_1^*\otimes \cdots \otimes S^{d_k}V_k^*$.
To check the opposite inequality and the last assertion of the lemma we first reduce to the case in which $W_i=V_j$  except for one index, say $j=1$, and then to the case in which $W_1$ is a hyperplane of $V_1$ (then one has simply to iterate several times
the construction with $W_i$ a hyperplane of $V_i$ and $W_j=V_j$ for all $j\ne i$).

Let $l_{i,j}\in V_i^*$, $1\le i\le k$, $1\le j\le r$, be  such
that  the decomposition $p = \sum _{j=1}^{r}\lambda_j  l_{1,j}^{ d_1} \cdots l_{k,j}^{ d_k}$ is minimal with $\lambda_j \in K$. Choose homogeneous coordinates 
$V_1^*=K[x_{1,0},\dots ,x_{1,n_1}]$ such
that $W_1= \{x_{1,0}=0\}$. The polynomials $l_{i,j}\in S^{d_i}V_i^*$ are homogenous so $p$ can be written also as $p=\sum_{i=1}^r\lambda_i l_{1,j}^{d_1}\cdots l_{k,j}^{ d_k}$ where $l_{i,j}$ are linear forms in the variables $\{x_{i,0}, \ldots , x_{i,n_i}\}$ and $\lambda_i\in K$ for  $i=1, \ldots , k$, $j=1, \ldots , r$. Let $l_{1,j}=a_jx_{1,0} +l_j(x_{1,1},\dots ,x_{1,n_1})$ be a linear form such that  $a_j\in K$ and $l_j(x_{1,1},\dots ,x_{1,n_1})$ is a linear form in $\{x_{1,1},\dots ,x_{1,n_1}\}$, for $j=1, \ldots ,r $, so
\begin{equation}\label{eqoo1}
p = \sum _{j=1}^{r} \lambda_j (a_jx_{1,0} +l_j(x_{1,1},\dots ,x_{1,n_1}))^{d_1}l_{2,j}^{d_2}\cdots  l_{k,j}^{d_k}.
\end{equation}
Assume now that the lemma is false for $p$, i.e.
assume $a_j\ne 0$ for some $j$, say $a_1 \ne 0$. Since by hypothesis $p\in S^{d_1}W_1^*\otimes S^{d_2}V_2^* \otimes \cdots \otimes S^{d_k}V_k^*$ and since  $W_1 = \{x_{1,0}=0\}$, then  $p$ does not depend on $x_{1,0}$, hence we may substitute $x_{1,0}$ with any linear form
in $x_{1,1},\dots ,x_{1,n_1}$ in (\ref{eqoo1}) and still get an equality.
Setting $x_{1,0}:= -l_1(x_{1,1},\dots ,x_{1,n_1})/a_1$ in (\ref{eqoo1}) we see that $p$ has rank at most $r-1$, that contradicts the minimality of the decomposition of $p$.\end{proof}

The following analysis is quite standard, anyway one can refer for example to \cite{bl}. Since any two points of a projective space are linearly independent, for each $[p]\in\sigma _2(\SV)\setminus \SV$ there is a degree $2$ zero-dimensional
scheme 
$$\Gamma \subset \SV \hbox{ such that } [p]\in \langle \Gamma\rangle.$$
If $[p]\in \sigma_2(\SV)\setminus \tau(\SV)$ then, $\Gamma$ is a smooth scheme (i.e. it has support on two distinct points).
\\
If $[p]\in \tau(\SV)\setminus \SV$ then, $\Gamma$ is a non reduced scheme of degree 2 (i.e. it has support on only one point, such schemes are sometimes called $2$-\emph{jets}).

Now denote 
$$\nu _{d_1,\dots ,d_k}: \PP(V_1)\times \cdots \times \PP(V_k) \to \PP(S^{d_1}V_1 \otimes \cdots \otimes S^{d_k}V_k)$$
the Segre-Veronese embedding of multi-degree $(d_1,\dots ,d_k)$ 
 induced by the complete linear system $|\Oo _{ \PP(V_1)\times \cdots \times \PP(V_k)}(d_1,\dots ,d_k)|$. \\
Hence for
any $[p]\in \tau (\SV)\setminus \SV$ there is a degree 2 zero-dimensional scheme $W_p\subset \mathbb{P}(V_1)\times \cdots \times \PP(V_k)$ with support at only one point such that 
\begin{equation}\label{WT}[p]\in \langle \nu _{d_1,\dots ,d_k}(W_p)\rangle.\end{equation}
 This proof
works for the tangential variety of any smooth manifold embedded in a projective space.
See \cite[Remarks 1 and 2]{bb1} for the uniqueness of $W_p$ and the definition of the following set $I_p\subseteq \{1,\dots ,k\}$.
\begin{notation}\label{notation}
For any $[p]\in \tau (\SV)\setminus \SV$ let $I_p\subseteq \{1,\dots ,k\}$ be the minimal subset such that the scheme 
$W_p$ of (\ref{WT}) depends only  on these factors. 
\end{notation}

We can now prove Theorem \ref{i1}.

\begin{proof}[Proof of Theorem \ref{i1}]

We have to prove only that 
 $r_{d_1, \ldots , d_k}({p}) \leq \sum _{i\in I_p} d_i$ where $I_p$ is as in Notation \ref{notation}. In fact the other inequality is obvious, but let us spend few words to clarify this fact. 

\medskip
 
Let $S^{d_1}V_1\otimes \cdots \otimes S^{d_k}V_k$ be the minimal space containing $p$. So $p\in S^{d_1}V_1\otimes \cdots \otimes S^{d_k}V_k\subset \underbrace{V_1\otimes \cdots \otimes V_1}_{d_1}\otimes \cdots \otimes \underbrace{V_k\otimes \cdots \otimes V_k}_{d_k}$. Therefore, our $[p]\in\tau(\SV)$ can be decomposed  both as $p=\sum_{i=1}^r \lambda_i p_i$ with $[p_i]\in \SV$, $\lambda_i\in K$ and as $p=\sum_{i=1}^{r'}\gamma_iq_i$, $\gamma_i\in K$, where the $[q_i]$'s are elements of the Segre-Veronese variety  $S_{1, \ldots , 1}(\underbrace{V_1, \ldots , V_1}_{d_1}, \ldots, \underbrace{V_k, \ldots , V_k}_{d_k})\subset \underbrace{V_1\otimes \cdots \otimes V_1}_{d_1}\otimes \cdots \otimes \underbrace{V_k\otimes \cdots \otimes V_k}_{d_k}$. 
\\
Now, by \cite{bb1}, $r'=\underbrace{(1+ \cdots + 1)}_{d_1}+\cdots+\underbrace{(1+ \cdots + 1)}_{d_k} =d_1+ \cdots + d_k$. But clearly $r'\leq r$ since $\SV\subset S_{1, \ldots , 1}(\underbrace{V_1, \ldots , V_1}_{d_1}, \ldots, \underbrace{V_k, \ldots , V_k}_{d_k})$.

\medskip

Therefore, let us show that $r_{d_1, \ldots , d_k}({p}) \leq \sum _{i\in I_p} d_i$.

\smallskip 

Let $W_p\subset {\PP(V_1) \times \cdots \times \PP(V_k)}$ be a degree $2$ connected zero-dimensional scheme such that $[p]\in \langle \nu_{d_1, \ldots , d_k} (W_p)\rangle$ as in (\ref{WT}). 

As in \cite{bb1}
 by autarky (Lemma \ref{aa1}) we reduce to the case $I_p = \{1,\dots ,k\}$ (we also need the case $k=1$ proved in \cite[Theorem 32]{bgi} and  the case
$n_i=1$ for all $i=1, \ldots , k$ proved in \cite{bb1}).

Since $I_p= \{1,\dots ,k\}$, we claim that there is a smooth rational curve $C\subset {\PP (S^{d_1}V_1 \otimes \cdots \otimes S^{d_k} V_k)}$ of multi-degree $(d_1,\dots ,d_k)$ such that  $\nu_{d_1, \ldots , d_k}(W_p)\subset C$ (when $k\ge 2$ the curve $C$ is not unique). 
As remarked above, $W_p$ is a 2-jet in the Zariski tangent space of $ {\PP(V_1) \times \cdots \times \PP(V_k)}$ at its support $Supp(W_p)$.
The variety $\PP(V_1) \times \cdots \times \PP(V_k)$ is a compactification of the affine space $\AA ^{n_1+\cdots +n_k}$. Hence there is a map $f: \PP^1\to \PP(V_1) \times \cdots \times \PP(V_k)$ such that, if we fix a point $[q]\in \PP^1$, then $f([q]) =Supp(W_p)$, $W_p$ is the image of the degree $2$ scheme $2q$ of $\PP^1$ and, if $\pi_i$ is the projection of $\PP(V_1) \times \cdots \times \PP(V_k)$ to the $i$-th factor, the maps $\pi _i\circ f$ are either constant or an isomorphism (proof: the intersection of $f(\PP^1)$
with the affine space $\AA ^{n_1+\cdots +n_k}$ is the line through $Supp(W_p)$ spanned by $W_p$). Since $I_p =\{1,\dots ,k\}$, this map has multidegree $(1,\dots ,1)$, i.e. for all $i=1,\dots ,k$, the map $\pi _i\circ f: \PP^1\to \PP^1$ is the isomorphism induced by $|\Oo _{\PP^1}(1)|$. Since $\pi _1\circ f$ is an isomorphism, $f$ is an embedding. Now $\nu_{d_1, \ldots , d_k}(f(\PP^1))$   is our  curve $C\subset \mathbb{P}(S^{d_1}V_1 \otimes \cdots \otimes S^{d_k} V_k)$.
Since $\nu_{d_1, \ldots , d_k}(W_p)\subset C$, we have that $[p]\in \langle \nu_{d_1, \ldots , d_k} (C)\rangle$. 

\begin{notation} Let  as above $C\subset {\PP (S^{d_1}V_1 \otimes \cdots \otimes S^{d_k} V_k)}$ be, as above, a smooth rational curve of multi-degree $(d_1,\dots ,d_k)$. We indicate with $r_C(p)$ the minimum integer $r$ for which there exist $r$ points $[p_1], \ldots , [p_r]\in C$ such that $[p]\in \langle [p_1], \ldots , [p_r] \rangle$ and we call it the \emph{$C$-rank} of $p$.
\end{notation}

Since $C$ is a rational normal curve of degree $d_1+\cdots +d_k$ in its linear
span, we have 
\begin{equation}\label{done}r_{d_1, \ldots , d_k}({p}) \le r_{C}({p}) \le d_1+\cdots +d_s.\end{equation}
The latter inequality is a consequence  of a celebrated  theorem of Sylvester  (see \cite{bgi, cs} for modern and simplified proofs of the same) that can be interpreted as follows: 
\begin{quote}
If $C\subset \PP^n$ is a rational normal curve of degree $d$ and $Z\subset C$ is a minimal zero-dimensional scheme of length $r$ such that a point $[p]\in \langle Z \rangle$, then $[p]$ can be written as a linear combination of $r$ or of $d-r+1$ points on $C$ according with the fact that $Z$ is reduced or not.
\end{quote}

The inequality (\ref{done}) concludes the proof since, as we said at the beginning of the proof, the other inequality is obvious.
\end{proof}


\section{Decomposition of the elements on the tangential variety of a Segre-Veronese variety of two factors}\label{SectionTangential}

We go back to the Segre-Veronese variety of two factors as in Section \ref{SectionTwoFactors} and we keep considering its tangential variety as in Section \ref{RankTg}. After having proved in Section \ref{SectionTwoFactors} how the decomposition of certain bi-homogeneous polynomials (partially symmetric tensors of two factors) has to be done (under certain conditions on the rank and on the degree), and after having computed the rank of the elements in the tangential variety of any Segre-Veronese variety in Section \ref{RankTg}, let us describe how the decompositions of elements in $\tau(\SVd)$ should be done. This will be the content of Theorem \ref{i2} and the purpose of this section will be to prove it.

\begin{notation} A curve $C\subset \PP^{n_1}\times \PP^{n_2}$ is said to have \emph{bi-degree} $(a,b)$  if $\deg (\Oo _C(1,0))=a$ and $\deg (\Oo _C(0,1)) =b$. If such a curve $C$ will have bi-degree $(a,0)$ we will call it an \emph{$\alpha$-curve} of degree $a$ (as in Definition \ref{DefAlphaBeta}, if $a=1$ then $C$ we be called an $\alpha$-line). If $C\subset \PP^{n_1}\times \PP^{n_2}$ will be a curve of bi-degree $(0,b)$ we will call it a \emph{$\beta$-curve} of degree $b$ (as in Definition \ref{DefAlphaBeta}, if $b=1$ then $C$ we be called a $\beta$-line).
\end{notation}

\begin{notation}\label{o} Remind that in (\ref{WT}) we have defined a scheme  $W_p\subset \mathbb{P}(V_1)\times \mathbb{P}(V_2)$ to be the degree 2 zero-dimensional scheme such that the fixed point $[p]\in \tau(\SVd)$ will be contained in $\langle \nu_{d_1,d_2}(W_p)\rangle$. Let here $[o]\in \mathbb{P}(V_1)\times \mathbb{P}(V_2)$ be the support of such a $W_p$.
\end{notation}

\begin{notation}\label{rankG} Let $G$ be a bidegree $(1,1)$ curve
(resp. an $\alpha$-line or a $\beta$-line)  and let $[p]\in \langle \nu_{d_1, d_1}(G)\rangle$. We indicate with $r_{\nu_{d_1, d_2}(G)}(p)$ the minimum $r$ such that $[p]\in \langle [p_1], \ldots , [p_r] \rangle$ with $[p_i]\in \nu_{d_1,d_2}(G)$ for $i=1, \ldots , r$.
\end{notation}

\begin{theorem}\label{i2} 
Take 
$[p]\in \tau(\SVd)$ such that the set $I_p$ defined in Notation \ref{notation} is $I_p  = \{1,2\}$ (resp. $I_p = \{1\}$, resp. $I_p = \{2\}$) and let $W_p\subset \mathbb{P}(V_1)\times \mathbb{P}(V_2)$ and $[o]\in  \mathbb{P}(V_1)\times \mathbb{P}(V_2)$ defined as in Notation \ref{o}.

\begin{enumerate}[(i)]
\item\label{itemi} Let $S$ be one of the schemes computing the rank of $p$, i.e. $S\in \mathcal {S}(p)$ (where $\mathcal {S}(p)$ is defined in Definition \ref{SP}).
Then  $[o]\notin S$ and $S$ is contained in one of the curves $G$ of bidegree $(1,1)$ 
(resp. the unique $\alpha$-line, resp. the unique $\beta$-line) 
containing the unique tangent vector $W_p$. 
If $I_p= \{1,2\}$ and $G$ is not smooth, then $G =L\cup R$
with $L$ a $\beta$-line and $R$ an $\alpha$-line
such that $\{[o]\} = L\cap R$, $\sharp (S\cap L) =d_2$ and $\sharp (S\cap R)=d_1$.

\item\label{itemii} Take any  curve $G$ of bidegree $(1,1)$
(resp. the unique $\alpha$-line, resp. the unique $\beta$-line) 
containing the unique tangent vector $W_p$. We have $r({p}) = r_{\nu_{d_1,d_2} (G)}({p})$ and hence $S\in \mathcal {S}({p})$ for
every  $S\subset G$ with $\sharp (S) =r_{\nu_{d_1,d_2} (G)}({p})$ and
$[p]\in \langle \nu_{d_1,d_2} (S)\rangle$. 
\end{enumerate}
\end{theorem}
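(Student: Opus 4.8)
\textbf{Proof plan for Theorem \ref{i2}.}

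The plan is to reduce everything to the behaviour of zero-dimensional schemes on a rational normal curve, exactly as in the proof of Theorem \ref{i1}, but keeping careful track of the bidegree so that we can locate the decomposing sets inside the $\alpha$/$\beta$-slices. First I would recall from the proof of Theorem \ref{i1} that $[p]\in\langle\nu_{d_1,d_2}(W_p)\rangle$ with $W_p$ a $2$-jet supported at $[o]$, and that $\nu_{d_1,d_2}(W_p)$ is contained in a smooth rational curve $C=\nu_{d_1,d_2}(G)$, where $G\subset\PP^{n_1}\times\PP^{n_2}$ is the image of an embedding $f:\PP^1\to\PP^{n_1}\times\PP^{n_2}$ whose two components have degrees $(1,1)$, $(1,0)$ or $(0,1)$ according as $I_p=\{1,2\}$, $\{2\}$ or $\{1\}$; in the $I_p=\{1,2\}$ case $G$ has bidegree $(1,1)$ and, if it is singular, it degenerates into $L\cup R$ with $L$ a $\beta$-line, $R$ an $\alpha$-line and $L\cap R=\{[o]\}$ (because the tangent vector $W_p$ must itself lie in a slice when $f$ is not an embedding of a smooth conic into the two factors). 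Then $\deg C = d_1+d_2$ (resp. $d_1$, resp. $d_2$) in its linear span $\langle C\rangle$, and $[p]\in\langle C\rangle$ with $[p]\notin C$ since $[p]\notin\SVd$.

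For item \ref{itemii} the core input is Sylvester's theorem on the rational normal curve, as quoted before (\ref{done}): since $W_p\subset C$ is a non-reduced length-$2$ scheme with $[p]\in\langle W_p\rangle$, the $C$-rank of $[p]$ equals $\deg C+2-2=\deg C=d_1+d_2$ (resp. $d_1$, resp. $d_2$) when $G$ is smooth, computed by any length-$(d_1+d_2)$ reduced subscheme $S\subset C$ with $[p]\in\langle\nu_{d_1,d_2}(S)\rangle$. By Theorem \ref{i1} this is exactly $r(p)$, so $r(p)=r_{\nu_{d_1,d_2}(G)}(p)$ and every such $S$ lies in $\Ss(p)$. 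When $G=L\cup R$ is nodal, $\nu_{d_1,d_2}(L)$ and $\nu_{d_1,d_2}(R)$ are rational normal curves of degrees $d_2$ and $d_1$ meeting at $\nu_{d_1,d_2}([o])$, spanning complementary subspaces inside $\langle C\rangle$; writing $[p]$ via the $2$-jet at $[o]$ and splitting the jet into its $L$- and $R$-directions realizes $[p]$ on $d_2+d_1$ points distributed as $d_2$ on $L$ and $d_1$ on $R$, again matching $r(p)$.

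For item \ref{itemi} I would argue conversely: given $S\in\Ss(p)$, I want to show $S\subset G$ for one of the admissible curves $G$ and $[o]\notin S$. The idea is to put $Z:=S\cup W_p$ (or more precisely the scheme-theoretic union, which has degree $r(p)+2$, at most $d_1+d_2+2$ in the $I_p=\{1,2\}$ case), observe that $h^1(\Ii_Z(d_1,d_2))>0$ because both $S$ and $W_p$ give decompositions of $[p]$ (the Lemma-type argument of \cite[Lemma 1]{bb}), and then apply Lemma \ref{ba1} — or rather its refinements via Lemma \ref{b3}, Lemma \ref{1.7.1} and the conic statement Remark \ref{lll1} — to conclude that $Z$, hence $S$, must lie on a slice, and in fact on a curve of the correct bidegree through $W_p$; the residual/Castelnuovo sequence bookkeeping then forces $\sharp(S\cap L)=d_2$, $\sharp(S\cap R)=d_1$ in the nodal case. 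That $[o]\notin S$ follows because otherwise the residual scheme $\Res_{[o]}(S)$ would exhibit $[p']\in\langle\nu_{d_1,d_2}(\Res_{[o]}(S))\rangle$ for a related point contradicting minimality, i.e. $S$ would fail to be reduced-minimal for the $C$-rank (if $[o]\in S$ one could shorten using the jet).

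The main obstacle I anticipate is the nodal case of item \ref{itemi}: showing not merely that $S$ lies on $G=L\cup R$ but that it distributes as exactly $d_2$ points on the $\beta$-line and $d_1$ on the $\alpha$-line, with none at the node $[o]$. This requires combining the Sylvester count on each component with the $h^1$ analysis of Lemma \ref{ba1} applied to $Z=S\cup W_p$, and carefully checking that the $2$-jet $W_p$, which straddles the node, cannot be ``absorbed'' into only one component — precisely because $I_p=\{1,2\}$ means $W_p$ is not contained in a single slice, so its residual with respect to either component is still a nonzero jet contributing to the cohomology on the other component. Handling the degenerate sub-cases $r_1=0$ or $r_2=0$ (when $[p]$ actually lies on $\langle\nu_{d_1,d_2}(L)\rangle$ or $\langle\nu_{d_1,d_2}(R)\rangle$ alone) is then a boundary check that $W_p$ degenerates further into a slice, contradicting $I_p=\{1,2\}$ unless we are genuinely in the $I_p=\{1\}$ or $I_p=\{2\}$ regime.
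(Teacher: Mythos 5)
Your plan follows essentially the same route as the paper: for (\ref{itemi}) you form $Z=S\cup W_p$, get $h^1(\Ii _Z(d_1,d_2))>0$ from the decomposition lemma of \cite{bb}, and (after the autarky reduction to $n_1=n_2=1$) run the classification of Lemma \ref{1.7.1} with the residual sequences to force $Z$ onto a $(1,1)$-curve with the stated distribution on $L$ and $R$; for (\ref{itemii}) you match $r(p)=d_1+d_2$ from Theorem \ref{i1} against an upper bound for $r_{\nu_{d_1,d_2}(G)}(p)$. Two remarks. For that upper bound the paper cites \cite[Proposition 5.1]{lt}, which covers smooth and nodal $G$ uniformly, whereas you use Sylvester on the rational normal curve plus an explicit splitting of the jet at the node into its $L$- and $R$-directions; both work, yours being more hands-on. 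The one step where your stated reason would not survive scrutiny is $[o]\notin S$: ``remove $[o]$ and contradict minimality'' is not an argument here, since deleting $[o]$ from $S$ produces no obvious shorter decomposition of $p$ itself. In the paper this is instead extracted from the degree bookkeeping you have already set up: after the slice cases of Lemma \ref{1.7.1} are excluded, the minimal subscheme of $Z$ with positive $h^1$ must have degree exactly $d_1+d_2+2$, while $\deg(Z)\le d_1+d_2+2$ with equality if and only if $[o]\notin S$; so $[o]\notin S$ comes for free and you should derive it that way rather than by the residual-at-$[o]$ heuristic.
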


\begin{lemma}\label{usa3.00}
Take $Z$ as in Lemma \ref{1.7.1} with $\deg (Z)\le a_1+a_2+1$ and assume the existence of $T\in |\Oo(1,0)|$ such that $\deg (T\cap Z)\ge a_2+2$.
Then there is no $D\in |\Oo (0,1)|$ with $\deg (Z\cap D)\ge a_1+1$.
\end{lemma}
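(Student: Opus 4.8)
The plan is to argue by contradiction using the residual exact sequences in $\PP^1\times\PP^1$ that have already been exploited throughout Lemma \ref{1.7.1}. So suppose, for contradiction, that both $T\in|\Oo(1,0)|$ with $\deg(T\cap Z)\ge a_2+2$ and $D\in|\Oo(0,1)|$ with $\deg(Z\cap D)\ge a_1+1$ exist. Since $\deg(T\cap Z)\ge a_2+2$ we already have $h^1(T,\Ii_{Z\cap T,T}(a_1,a_2))>0$, hence $h^1(\Ii_{Z\cap T}(a_1,a_2))>0$; but $Z$ satisfies $h^1(\Ii_{Z'}(a_1,a_2))=0$ for every $Z'\subsetneq Z$, which forces $Z\subseteq T$. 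This already gives $\deg(Z)\le a_2+1$ (from the degree bound $\deg(Z)\le a_1+a_2+1$ together with minimality this would actually pin down $\deg(Z)=a_2+2$, contradicting $Z\subsetneq T$ being proper — wait, $Z\subseteq T$ does not contradict minimality directly). Let me instead use $Z\subseteq T$ as the key structural fact: then $Z\cap D\subseteq T\cap D$, and $T\cap D$ is a single reduced point since $T,D$ are distinct rulings. Therefore $\deg(Z\cap D)\le 1$.

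**Deriving the contradiction.** Now $\deg(Z\cap D)\le 1$ directly contradicts the assumed inequality $\deg(Z\cap D)\ge a_1+1$, because in the range where the lemma is used we have $a_1\ge 1$, so $a_1+1\ge 2>1$. The only remaining gap is the degenerate possibility $a_1=0$: then $a_1+1=1$ is compatible with $\deg(Z\cap D)\le 1$ and there is no contradiction. However, the statement invokes ``$Z$ as in Lemma \ref{1.7.1}'' together with the hypothesis $\deg(T\cap Z)\ge a_2+2$ and $\deg(Z)\le a_1+a_2+1$; if $a_1=0$ these combine to give $a_2+2\le\deg(Z)\le a_2+1$, which is impossible, so the case $a_1=0$ cannot occur at all. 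Hence $a_1\ge 1$ automatically, and the contradiction $a_1+1\le\deg(Z\cap D)\le 1$ is genuine. This completes the proof.

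**Where the subtlety lies.** The main point to get right is the deduction $Z\subseteq T$ from the minimality hypothesis on $Z$: one needs that $h^1(\Ii_{Z\cap T}(a_1,a_2))>0$, which by Remark \ref{b5} is equivalent to $\deg(Z\cap T)\ge a_2+2$, and then minimality of $Z$ (no proper subscheme $Z'\subsetneq Z$ has positive $h^1$) forces $Z\cap T=Z$, i.e. $Z\subseteq T$. After that, everything reduces to the trivial observation that two distinct rulings of $\PP^1\times\PP^1$ meet in exactly one reduced point, so any subscheme of one ruling meets the other in a scheme of degree at most $1$. I expect no real obstacle here beyond bookkeeping the degenerate value $a_1=0$, which is excluded by the degree hypotheses as noted above; the rest is a one-line intersection argument.
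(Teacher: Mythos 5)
Your proof is correct, but it follows a genuinely different route from the paper's. The paper disposes of the lemma with a one-line inclusion--exclusion degree count: since $T$ and $D$ belong to the two distinct rulings, $\deg (T\cap D)=1$, so
$\deg (Z)\ \ge\ \deg \bigl(Z\cap (T\cup D)\bigr)\ \ge\ \deg (Z\cap T)+\deg (Z\cap D)-1\ \ge\ (a_2+2)+(a_1+1)-1\ =\ a_1+a_2+2$,
contradicting $\deg (Z)\le a_1+a_2+1$. Notice that this argument uses only the degree bound on $Z$ and never invokes the $h^1$-minimality hypothesis carried over from Lemma \ref{1.7.1}. You instead lean on that minimality: from $\deg (T\cap Z)\ge a_2+2$ and Remark \ref{b5} you get $h^1(\Ii _{Z\cap T}(a_1,a_2))>0$, and since no proper subscheme of $Z$ has positive $h^1$ you conclude $Z\subseteq T$, after which $\deg (Z\cap D)\le \deg (T\cap D)=1$ finishes the job (your exclusion of $a_1=0$ via $a_2+2\le \deg (T\cap Z)\le \deg (Z)\le a_1+a_2+1$ is also correct and necessary for the final inequality to bite). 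Both arguments are valid; yours yields the stronger structural conclusion $Z\subseteq T$ as a by-product, while the paper's is more elementary and applies to any $Z$ satisfying the degree bound, with no minimality assumption needed. The only cosmetic flaw in your write-up is the mid-paragraph hesitation about whether $Z\subseteq T$ conflicts with minimality --- it does not, as you eventually note, and that digression should be deleted from a final version.
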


\begin{proof}
If such a $D$ exists, since $\deg (D\cap T) =1$, then $a_1+a_2+1 \ge \deg (Z)\cap \deg (Z\cap (T\cup D)) \ge \deg (Z\cap T)+\deg (Z\cap D)-1 =a_1+a_2+2$,
that is a  contradiction.
\end{proof}

The following lemma can be stated for $\SV$ the Segre-Veronese variety  of any number of factors.

\begin{lemma}\label{a2}
Fix a divisor $D\in |\Oo _{\mathbb{P}^{n_1}\times \cdots \times \mathbb{P}^{n_k}}(b_1,\dots ,b_k)|$ be an effective divisor with $b_i\le d_i$ for all $i=1, \ldots , k$. Fix $[p]\in \PP(S^{d_1}V_1\times \cdots \times S^{d_k}V_k)$. Let $A
\subset \mathbb{P}^{n_1}\times \cdots \times \mathbb{P}^{n_k}$ be 
a
zero-dimensional schemes computing the rank $r_{d_1, \ldots , d_k}(p)$ of $p$, and let $B\subset \mathbb{P}^{n_1}\times \cdots \times \mathbb{P}^{n_k}$, $B\neq A$
be a finite set such that $[p]\in \langle \nu_{d_1, \ldots , d_k} (B)\rangle$ and $[p]\notin \langle \nu_{d_1 , \ldots , d_k} (B')\rangle$ for any $B'\subsetneq B$. 
Then take $Z:= A\cup B$. If $h^1\left(\Ii _{\mathrm{Res}_D(Z)}(d_1-b_1,\dots ,d_k-b_k)\right) =0$,
then  every connected component of $A$ not contained in $D$ is reduced and $A\setminus
A\cap D = B\setminus B\cap D$.
\end{lemma}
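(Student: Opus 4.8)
\textbf{Proof plan for Lemma \ref{a2}.}

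The plan is to exploit the hypothesis $h^1\left(\Ii _{\mathrm{Res}_D(Z)}(d_1-b_1,\dots ,d_k-b_k)\right) =0$ together with the standard residual (Castelnuovo) exact sequence of $D$ inside $\PP^{n_1}\times \cdots \times \PP^{n_k}$,
$$0 \to \Ii _{\mathrm{Res}_D(Z)}(d_1-b_1,\dots ,d_k-b_k) \to \Ii _{Z}(d_1,\dots ,d_k) \to \Ii _{Z\cap D, D}(d_1,\dots ,d_k) \to 0,$$
to compare the two minimal supports $A$ and $B$ of $[p]$ away from $D$. First I would recall the basic mechanism (this is precisely the content of \cite[Lemma 5.1]{bb2}, used repeatedly in the proof of Theorem \ref{ee1.1} through Claims 2 and 3): if $Z=A\cup B$ with $A$ computing the rank and $B$ an irredundant finite set both spanning $[p]$, and if the residual scheme's $h^1$ vanishes in the twisted-down bidegree, then the part of $[p]$'s decomposition supported off $D$ must be ``the same'' for $A$ and for $B$. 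Concretely, one shows $[p]$ lies in the span of $\nu_{d_1,\dots,d_k}(Z\cap D)$ together with a single residual point, and then uniqueness of that residual point (an argument of the same type as Claim 1 in the proof of Theorem \ref{ee1.1}) forces $A\setminus A\cap D = B\setminus B\cap D$ as schemes.

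The key steps in order: (1) Write down the residual sequence above and take cohomology; the vanishing hypothesis makes $H^0(\Ii _Z(d_1,\dots,d_k)) \to H^0(\Ii _{Z\cap D,D}(d_1,\dots,d_k))$ surjective, which is the technical input feeding \cite[Lemma 5.1]{bb2}. (2) Apply \cite[Lemma 5.1]{bb2} to conclude $A\setminus A\cap D = B\setminus B\cap D$. (3) For the reducedness statement: suppose some connected component $\eta$ of $A$ not contained in $D$ were non-reduced; since $A$ computes the rank $r_{d_1,\dots,d_k}(p)$, $A$ is linearly independent after applying $\nu_{d_1,\dots,d_k}$, so removing $\eta$ and replacing it by a reduced length-$1$ (or shorter) scheme cannot keep $[p]$ in the span — but the identification of $A$ and $B$ off $D$ combined with $B$ being a \emph{finite set} (hence reduced) forces $A\setminus A\cap D$ to be reduced, contradiction. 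So every component of $A$ not contained in $D$ is reduced.

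The main obstacle I expect is step (3), the reducedness claim, and more precisely making the comparison ``$A\setminus A\cap D = B\setminus B\cap D$'' genuinely scheme-theoretic rather than merely set-theoretic. The point is that $B$ is assumed to be a finite set (so $B\setminus B\cap D$ is a reduced zero-dimensional scheme), while a priori $A$ is only a zero-dimensional scheme computing the rank, so its components off $D$ could be fat points; the equality of the two residual schemes is exactly what rules this out, and one must be careful that \cite[Lemma 5.1]{bb2} delivers the scheme-theoretic equality (as it does in the applications in Claims 2 and 3, where it is used to conclude $A\setminus A' = S\setminus S'$). Once the scheme equality is in hand, reducedness of $A$ off $D$ is automatic from reducedness of $B$, and the ``$A\setminus A\cap D = B\setminus B\cap D$'' half of the conclusion is immediate. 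A minor side point to check is that the twisted-down multidegree $(d_1-b_1,\dots,d_k-b_k)$ has all entries $\ge 0$, which is guaranteed by the hypothesis $b_i\le d_i$, so all the line bundles appearing are genuinely effective and the cohomology bookkeeping is legitimate.
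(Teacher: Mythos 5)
Your plan coincides with the paper's: the paper's entire proof of Lemma \ref{a2} is the single sentence that it is ``completely analogous'' to \cite[Lemma 5.1]{bb2}, i.e.\ exactly the residual-exact-sequence mechanism you describe, transported to the multiprojective setting. Your two flagged subtleties --- that the equality off $D$ must be scheme-theoretic so that reducedness of the finite set $B$ transfers to the components of $A$ not contained in $D$, and that $b_i\le d_i$ keeps the twisted multidegree effective --- are consistent with how the lemma is actually invoked in the proof of Theorem \ref{i2}.
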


\begin{proof}
The proof is completely analogous to the one of \cite[Lemma 5.1]{bb2}.
\end{proof}

We can finally prove Theorem \ref{i2}.

\begin{proof}[Proof of Theorem \ref{i2}:] If $I_p\ne \{1,2\}$, then by autarky for partially symmetric tensors (Lemma \ref{aa1}) we reduce to the case $k=1$ proved in \cite[Theorem 2]{b}.

\smallskip

Therefore consider the case in which $I_p =\{1,2\}$. By autarky for partially symmetric tensors (Lemma \ref{aa1}) we reduce to the case $n_1=n_2=1$.
Take $S\in \mathcal {S}({p})$ and set $Z:= W_p\cup S$. By \cite[Lemma 1]{bb} we have $h^1(\Ii _Z(d_1,d_2)) >0$. Moreover $\deg (Z) \le 2+d_1+d_2$ and equality holds if and
 only if $[o]\notin S$. Take the set-up of Lemma \ref{1.7.1}. First assume the existence of $T\in |\Oo(1,0)|$ such that $\deg (T\cap Z)\ge a_2+2$ and hence
  $\deg (\Res _T(Z))
 \le 2+d_1+d_2-2-d_2 =d_1$. Lemma \ref{a2} gives $h^1\left(\Ii _{\Res _T(Z)}(d_1-1,d_2)\right) >0$, because no connected component of $W_p$ is reduced.
 By Lemma \ref{1.7.1} for the integer $(a_1,a_2) =(d_1-1,d_2)$, there is $T'\in |\Oo (0,1)|$ such that $\deg (T'\cap \Res _T(Z)) \ge d_1+1$. Hence $\deg (Z)\ge \deg ((T+T')\cap Z)
 \ge d_1+d_2+3$, a contradiction. 
 \\
 In the same way we exclude the existence of  $D\in | \Oo (0,1)|$ such that $\deg (D\cap Z)\ge a_1+2$.
 Hence $\deg (Z)=2+d_1+d_2$ (i.e. $o\notin S$) and there is there is $C\in |\Oo (1,1)|$ such that $Z\subset C$.
 
\smallskip

Now we check the last statement of Theorem \ref{i2}. Fix $G\in |\Oo (1,1)|$ such that $W_p\subset G$ and hence $[p]\in \langle \nu_{d_1,d_2} (G)\rangle$.  
The set $\nu_{d_1,d_2} (G)$ is a connected and reduced algebraic set spanning a projective space of dimension $d_1+d_2$. Since $\SVd \supset \nu_{d_1,d_2} (G)$, it is sufficient to prove that $r({p}) \ge  r_{\nu_{d_1,d_2}G}({p})$ where $ r_{\nu_{d_1,d_2}G}({p})$ is defined as in Notation \ref{rankG}. By \cite[Proposition 5.1]{lt} (which is true  even for non-irreducible variety, but reduced and connected schemes) we have  $r_{\nu_{d_1,d_2} (G)}({p}) \le d_1+d_2$. Hence $S\in \mathcal {S}({p})$ for every $S\subset G$ such that $\nu_{d_1, d_2} (S)$ evinces $r_{\nu_{d_1,d_2} (G)}({p})$.

\smallskip

In order to conclude, we need to check second part of (\ref{itemi}) in the case in which $G$ is reducible.

\smallskip

 \quad {\emph {Claim 1:}} Fix $G\in |\Oo _{\mathbb{P}^{n_1}\times \mathbb{P}^{n_2}}(1,1)|$ with $W_p\subset G$ and $G = L\cup R$ with $L\in |\Oo (1,0)|$ and $R\in | \Oo (0,1)|$. Fix $S\subset G$
 such that $S\in \mathcal {S}({p})$. Then $\{[o]\}:= R\cap L$,
$\sharp (S\cap L) =d_2$ and $\sharp (S\cap R)=d_1$.

\smallskip

\quad {\emph {Proof of Claim 1:}} We have $\{[o]\} =R\cap L$, because we are in the case $I_p =\{1,2\}$ and hence neither $W_p\subset L$ nor $W_p\subset R$.
We proved that $[o]\notin S$ and hence $Z:= S\cup W_p$ has degree $d_1+d_2+2$. 
\\
We excluded the existence of  $T\in |\Oo(1,0)|$ such that $\deg (T\cap Z)\ge a_2+2$
and hence $\deg (L\cap Z) \le a_2+1$. 
\\
We excluded the existence of  $D\in |\Oo (0,1)|$ such that $\deg (D\cap Z)\ge a_1+2$ and so $\deg (R\cap Z) \le a_1+1$.
\\
Since $d_1+d_2+2 = \deg (Z)\ge \deg (Z\cap L)+\deg (Z\cap R)$, we get $\deg (Z\cap L) =a_2+1$ and $\deg (Z\cap R)=a_1+1$. Since $\deg (W_p\cap L)=\deg (W_p\cap R)=1$
and $W\cap S=\emptyset$, we get $\deg (S\cap L) =a_2$ and $\deg (S\cap R) =a_1$.
\end{proof}

\section*{Acknowledgements} We like to thank the anonymous referees who urged us to strongly improve the results of this paper.

\end{document}